\newtheorem{theorem}{Theorem}[section]
\newtheorem*{theorem*}{Theorem}
\newtheorem{lemma}[theorem]{Lemma}
\newtheorem{question}[theorem]{Question}
\newtheorem{proposition}[theorem]{Proposition}
\newtheorem{corollary}[theorem]{Corollary}
\newtheorem*{proposition*}{Proposition}
\newtheorem{problem}[theorem]{Problem}
\newtheorem*{problem*}{Open Problems}
\theoremstyle{definition}
\newtheorem{definition}[theorem]{Definition}
\newtheorem{example}[theorem]{Example}
\newtheorem{remark}[theorem]{Remark}
\numberwithin{equation}{section}
\DeclareMathOperator{\Ann}{Ann}
\DeclareMathOperator{\Ext}{Ext}
\DeclareMathOperator{\Hess}{Hess}
\DeclareMathOperator{\Hom}{Hom}
\DeclareMathOperator{\reg}{reg}
\DeclareMathOperator{\lt}{lt}
\DeclareMathOperator{\lcm}{lcm}
\newcommand {\PP}{\mathbb{P}}
\newcommand{\kk}{\ensuremath{K}}
\newcommand{\HF}{\ensuremath{\mathrm{HF}}}
\newcommand{\qand}{\quad \mbox{and} \quad}
\newcommand{\qwhere}{\quad \mbox{where} \quad}
\newcommand{\qwith}{\quad \mbox{with} \quad}
\newcommand{\qfor}{\quad \mbox{for} \quad}
\newcommand{\qforall}{\quad \mbox{for all} \quad}
\newcommand{\m}{\mathbf{m}}
\renewcommand{\char}{\rm char}
\title[AG algebras with binomial Macaulay dual generator]{Artinian Gorenstein algebras with binomial Macaulay dual generator}
\author[]{Nasrin Altafi}
\address{Nasrin Altafi: Department of Mathematics, KTH Royal Institute of Technology, S-100 44 Stockholm, Sweden and Department of Mathematics, Queen's University, 505 Jeffery Hall, University Avenue, Queen's University, Kingston, Ontario, Canada K7L 3N6}
\email{nar3@queensu.ca}
 \author[]{Rodica Dinu}
 \address{Rodica Dinu: University of Konstanz, Fachbereich Mathematik und Statistik, Fach D 197 D-78457 Konstanz, Germany, and Simion Stoilow Institute of Mathematics of the Romanian Academy, Calea Grivitei 21, 010702, Bucharest, Romania}
  \email{rodica.dinu@uni-konstanz.de}
\author[]{Sara Faridi}
\address{Sara Faridi: Department of Mathematics and Statistics,
Dalhousie University,
6297 Castine Way,
PO BOX 15000.
Halifax, NS.
Canada B3H 4R2}
\email{faridi@dal.ca}
\author[]{Shreedevi K. Masuti}
\address{Shreedevi K. Masuti: Department of Mathematics, Indian Institute of Technology Dharwad, Permanent Campus, Chikkamalligwad,  Dharwad - 580011, Karnataka, India}
\email{shreedevi@iitdh.ac.in}
  \author[]{Rosa M.\ Mir\'o-Roig}
  \address{Rosa Maria Mir\'o-Roig: Facultat de
  Matem\`atiques i Inform\`atica, Universitat de Barcelona, Gran Via des les
  Corts Catalanes 585, 08007 Barcelona, Spain} \email{miro@ub.edu,  ORCID 0000-0003-1375-6547}
\author[]{Alexandra Seceleanu}
\address{Alexandra Seceleanu: Department of
  Mathematics, University of Nebraska-Lincoln, 203 Avery Hall, Lincoln, NE 68588, USA}
\email{aseceleanu@unl.edu}
\author[]{Nelly Villamizar}
\address{Nelly Villamizar: Department of Mathematics, Swansea University, Fabian Way, SA1 8EN, Swansea, UK
}
\email{n.y.villamizar@swansea.ac.uk}
\thanks{\hspace{-12pt}Altafi was supported by Swedish Research Council grant VR2021-00472, Dinu was supported by the Alexander von Humboldt Foundation and DFG grant nr 467575307, Faridi was supported by an NSERC Discovery grant~2023-05929, Masuti is supported by CRG grant CRG/2022/007572 and MATRICS grant MTR/2022/000816 funded by ANRF,  Govt. of India, Mir\'o-Roig was partially supported by the grant PID2020-113674GB-I00,
Seceleanu was partially supported by NSF DMS–2101225 and DMS–2401482, Villamizar was partially supported by the EPSRC New Investigator Award EP/V012835/1.}
\begin{document}

\begin{abstract} 
This paper initiates a systematic study for key properties of Artinian Gorenstein \(K\)-algebras having binomial Macaulay dual generators. In codimension 3, we demonstrate that all such algebras satisfy the strong Lefschetz property, can be constructed as a doubling of an appropriate 0-dimensional scheme in \(\mathbb{P}^2\), and we provide an explicit  characterization of when they form a complete intersection. For arbitrary codimension, we establish sufficient conditions under which the weak Lefschetz property holds and show that these conditions are optimal.
\end{abstract}

\maketitle

\date{}

\setcounter{tocdepth}{1}
\tableofcontents

\section{Introduction}
The study of graded Artinian Gorenstein \(K\)-algebras holds a significant place in commutative algebra  due to their rich structure and numerous applications. Central to the analysis of these algebras is the concept of a Macaulay dual generator, which, via Macaulay-Matlis duality, connects every finitely generated Artinian Gorenstein graded \(K\)-algebra \(A_F\) to a single homogeneous polynomial \(F\). This duality not only provides a parametrization of these algebras but also transforms algebraic properties into questions about the polynomial \(F\), streamlining their exploration.

When the Macaulay dual generator $F$ is a monomial, the associated Artinian  Gorenstein $K$-algebra $A_F$ is a monomial complete intersection, by which we mean a quotient of a polynomial ring by an ideal generated by pure powers of each of the variables. The case where \(F\) is monomial has been extensively studied, yielding insights into complete intersections, Koszul resolutions, and Lefschetz properties.
In more detail, this class of rings satisfies exceptional properties among which we single out a few:
\begin{inparaenum}
    \item The minimal generators of the ideal defining a monomial complete intersection form a Gr\"obner basis;
    \item The minimal free resolutions for (monomial) complete intersections are given by the Koszul complex;
    \item Monomial complete intersections satisfy the strong Lefschetz property in characteristic zero \cite{stanley2,watanabe};
    \item (Monomial) complete intersections can be obtained by a doubling construction; see \cref{doubling_const}.
\end{inparaenum}

The weak Lefschetz property (WLP) and strong Lefschetz property (SLP) have become fundamental concepts in commutative algebra on the basis of their deep connections to algebraic geometry and topology, and applications in combinatorics \cite{LefscetzBook}. These properties describe the action of a linear form on the graded components of Artinian algebras, often signaling a balance in their growth and decay patterns. For instance, algebras satisfying Lefschetz properties  exhibit unimodal Hilbert functions. The Lefschetz properties are related to syzygies and free resolutions, and play a crucial role in understanding the geometry of projective varieties \cite{MNtour}. Despite significant progress, several important question in this area  remain open. A major unresolved issue is to determine whether all complete intersection rings of characteristic zero satisfy the WLP or SLP, and similarly for codimension three Artinian Gorenstein rings \cite{HMNW}.

The doubling construction  is a technique used to build a Gorenstein ring \( R/I \) from a Cohen-Macaulay ring \( R/J \). The construction is facilitated through a short exact sequence linking \( R/J \), its canonical module, and \( R/I \). This process effectively "doubles" the minimal free resolution of \( R/J \), combining it with its dual, resulting in a resolution for \( R/I \) that reflects this doubled structure. The doubling construction is particularly significant in Gorenstein liaison theory, as it provides a method to produce Gorenstein ideals as divisors on arithmetically Cohen-Macaulay schemes \cite{KMMNP}. This construction also offers unresolved questions in the classification and study of Gorenstein rings, primarily the question of classifying Gorenstein rings that are obtained by doubling \cite{KKRSSY}.

To summarize, beyond the case of monomial complete intersections, many problems regarding graded Artinian Gorenstein (AG) rings remain unresolved. 
 Some of the major open problems in this area, and relevant to this paper, are the following.

\begin{problem*}\label{Q1}
\phantom{a} \hfill
\begin{enumerate}[itemsep=0.1cm,topsep=0.1cm]
\item\label{o1} Characterize the Macaulay dual generators of complete intersection algebras \cite{HWW}.
\item\label{o2} Determine the  AG algebras which can be obtained by doubling \cite{KKRSSY}.
\item\label{o3}     Determine whether every AG algebra of codimension three, over a field of characteristic zero, satisfies the strong (or the weak) Lefschetz property \cite{HMNW}.
\item\label{o4}     Determine which AG algebras (of any codimension),  satisfy the strong (or the weak) Lefschetz property.
\end{enumerate}
\end{problem*}

Using the list above as a guide, in this paper we  study properties of AG algebras whose Macaulay dual generator is a binomial, that is, a $K$-linear combination of two monomials. Determining the structural and Lefschetz properties of Artinian Gorenstein \(K\)-algebras when \(F\) is a binomial remains a challenge and has been so far an unexplored topic. The binomial case bridges the gap between monomials and more general forms, providing a fertile ground for theoretical discovery. Therefore our results can be conceived as a case study to guide future research towards solving the above-mentioned problems.

For a binomial dual generator, we solve the first three problems listed above for algebras of codimension $3$,  and we provide sufficient conditions that answer Open Problem~\eqref{o4} in an optimal way in codimension $\ge 4$. Our results can be summarized as follows.

\begin{theorem*}[{\bf Main Results}]\label{thm: main}
Let $A_F$ be an Artinian Gorenstein $K$-algebra of codimension $c$ with binomial Macaulay dual generator $$F= m_1- m_2$$ where $m_1$ and $m_2$ are monomials of degree $d$. Then 
\begin{enumerate}[leftmargin=3em,itemsep=0.1cm,topsep=0.1cm]
\item if $c\ge 3$ and $\char(K)=0$ and $\deg(\gcd(m_1,m_2)) < \lfloor \frac{d-1}{2} \rfloor$, then $A_F$ satisfies the Weak  Lefschetz property (\cref{main1}); 
\item if $c=3$, we give explicit formulas for the minimal generators of $\Ann(F)$ (\cref{prop: gens family4}), providing  a criterion for when $A_F$ is a complete intersection ring (\cref{c:ci});
\item if $c=3$, the minimal generators of $\Ann(F)$ form a Gr\"obner basis with respect to a suitable monomial order (\cref{prop: GB});
\item if $c=3$, we determine the graded Betti numbers for $A_F$ (\cref{prop: gens family4});
\item if $c=3$ and $\char(K)=0$, then $A_F$ satisfies the strong Lefschetz property (\cref{thm: family 6});
\item if $c=3$, then $A_F$ is the doubling of a $0$-dimensional subscheme in $\PP^2$ (\cref{doubling2});
\item if $4 \leq c \leq  10$,  we give a specific binomial $F$ for which   $A_F$ fails the weak Lefschetz property  (\cref{e:failure}).
\end{enumerate}
Moreover, if $K$ is an algebraically closed field, then all assertions above hold more generally for $F=a \cdot m_1-b\cdot m_2$, where  $a,b$  are any  nonzero constants. 
\end{theorem*}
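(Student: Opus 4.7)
The Main Results theorem collects results proved across the later sections, so the plan is to outline the strategy for each part. The unifying preliminary reduction is to write $F = g \cdot (m_1'-m_2')$ with $g=\gcd(m_1,m_2)$ and $\gcd(m_1',m_2')=1$, and to analyze $\Ann(F)$ via the apolarity (contraction) action of $K[x_1,\ldots,x_c]$ on itself: a polynomial annihilates $F$ exactly when its contraction action kills $m_1$ and $m_2$ in matching fashion. Natural candidate generators are of two kinds---pure powers $x_i^{e_i+1}$, with $e_i$ the maximal exponent of $x_i$ appearing in $m_1$ or $m_2$, together with binomial ``difference'' relations coming from monomials that contract $m_1$ and $m_2$ to the same monomial.

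For the codimension-3 parts (2)--(6) I would make these candidates fully explicit in terms of the exponent vectors of $m_1,m_2$, then invoke the Buchsbaum--Eisenbud structure theorem, which constrains a minimal free resolution of a codimension-3 Gorenstein ideal to have an odd number of generators with Pfaffian syzygies. Matching the candidates against this constraint simultaneously proves minimality, computes the graded Betti numbers (giving (4)), and yields a criterion for when three generators suffice, which is the complete-intersection criterion in (2). A suitable monomial order (a lex or degrevlex refinement adapted to the binomial structure) should exhibit the generators as a Gr\"obner basis, proving (3). For SLP (5), once the resolution data is in hand it suffices to exhibit a single Lefschetz element such as $\ell=x_1+x_2+x_3$ and verify maximal rank of the maps $\times \ell^k:(A_F)_i\to(A_F)_{i+k}$ on each graded piece; Hilbert function symmetry reduces this to finitely many rank checks that the explicit form of $\Ann(F)$ makes tractable. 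For the doubling assertion (6), I would identify a codimension-2 ideal $J\subset\Ann(F)$ defining a $0$-dimensional subscheme $Z\subset\PP^2$ together with a map realizing the canonical module of $R/J$, so that the doubling short exact sequence recovers $A_F$; the Pfaffian matrix provides a natural candidate for $J$.

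For the higher-codimension WLP in (1), the plan is to test multiplication by an explicit linear form (e.g.\ a symmetric one) and prove maximal rank in every degree. The hypothesis $\deg(\gcd(m_1,m_2))<\lfloor(d-1)/2\rfloor$ is calibrated so that the perturbation from a monomial complete intersection by the binomial relation does not collapse the rank in the middle degree; controlling this middle-degree rank is the main obstacle, and I expect the degree bound to enter via an explicit dimension count on kernels of the contraction map. The optimality statement (7) is then obtained by exhibiting, for each $c\in\{4,\ldots,10\}$, a specific binomial $F$ whose $\gcd$ degree just violates the bound and for which multiplication by a generic linear form demonstrably fails to be of maximal rank; this can be verified via apolarity. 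Finally, for the ``moreover'' clause, a diagonal change of variables $x_i\mapsto\lambda_i x_i$ transforms $F=a m_1-b m_2$ into $a\lambda^{\ba}m_1-b\lambda^{\bfb}m_2$, where $\ba,\bfb$ are the exponent vectors of $m_1,m_2$; since $m_1\neq m_2$ at least one coordinate of $\ba-\bfb$ is nonzero, so over an algebraically closed field one can solve $\lambda^{\ba-\bfb}=b/a$ to reduce $F$ to a nonzero scalar multiple of $m_1-m_2$, and all the properties in (1)--(7) are invariant under the resulting graded isomorphism of $A_F$.
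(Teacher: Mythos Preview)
Your outline captures the right spirit for the structural codimension-three results (parts (2)--(4), (6)) and for the ``moreover'' clause, but it misses the key ideas for parts (1) and (5), and understates the complexity of the generators in (2).

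For part (1), the paper does not attempt a direct rank computation. Instead it uses the connected sum decomposition $A_F = A_{gm_1}\#_{A_g}A_{gm_2}$ (where $g=\gcd(m_1,m_2)$) and the associated exact sequences relating $A_F$ to $A_{gm_1}\oplus A_{gm_2}$ and $A_g$. The degree hypothesis $\deg(g)<\lfloor (d-1)/2\rfloor$ is used precisely to force $[A_g]_t=0$ and $[A_g(\deg g - d)]_t=0$ at $t=\lfloor(d-1)/2\rfloor$ and $t+1$, so that in those two degrees $[A_F]_t\cong[A_{gm_1}]_t\oplus[A_{gm_2}]_t$ on the nose. WLP then follows because monomial complete intersections have SLP in characteristic zero, and one only needs injectivity in this single middle degree. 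Your ``dimension count on kernels of the contraction map'' does not identify this mechanism, and without the connected sum structure it is unclear how you would control the middle degree uniformly over all exponent choices.

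For part (5), your plan to ``verify maximal rank \ldots\ on each graded piece'' via finitely many rank checks is not a viable strategy: the family is parameterized by five unbounded integers $a,b,c,e,m$, so there is no finite verification. The paper's argument is entirely different: it passes to the initial ideal via Wiebe's lemma (so SLP for $R/\operatorname{in}(\Ann F)$ implies SLP for $A_F$), and then analyzes the resulting monomial ideals. Two of the cases are handled by an existing theorem of Chase on type-two monomial ideals; the remaining case requires decomposing the initial ideal as $U\cap V$ and a delicate snake-lemma comparison of kernels across two commutative diagrams. None of this is accessible from the explicit Lefschetz element you propose.

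A smaller point on (2)--(4): the minimal generators are not all binomials. The key player is a telescoping polynomial $P=\sum_{i=0}^q x^{ei}y^{mi}z^{c+1-ni}$ with $q=\lfloor(c+1)/n\rfloor$, and the case split into four families depends on how $a,b$ compare to $qe,qm,(q+1)e,(q+1)m$. Your description of ``binomial difference relations'' would not lead you to $P$, and without $P$ you cannot write down the Buchsbaum--Eisenbud matrices that drive (4) and (6).
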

The assertions listed in the above theorem are proved as independent results throughout the paper, as referenced above. A reduction of  $F=a \cdot m_1-b\cdot m_2$ to $F=m_1-m_2$ under the assumption that $K$ contains all roots of unity is performed in \cref{lem: no constants}.

Our paper is structured as follows: In section 2, we gather the necessary background on Macaulay duality; connected sums; Lefschetz properties, and the doubling construction.
Section 3 contains our main result about AG algebras of arbitrary codimension  with  binomial Macaulay dual generator (see \cref{main1})  and, in sections 4, 5, and 6 we restrict our attention to the codimension 3 case. Section 4 focuses on the explicit description of the relations of an AG codimension 3 algebra $A_F$ with $F$ a binomial (see \cref{prop: gens family4,prop: GB}). In section 5 we prove that these algebras have the strong Lefschetz property (\cref{thm: family 6}) and in section 6 we return to structural and homological aspects establishing that such algebras can be obtain by means of  doubling (\cref{doubling2}).

\subsubsection*{Acknowledgements} This project started at the  meeting ``Women in Commutative Algebra II (WICA II)``  at CIRM Trento, Italy, October 16-20, 2023.  The authors would like to thank the CIRM and the organizers for the invitation and financial support through NSF DMS--2324929.
\section{Background}

Throughout this paper, $K$ will be a field. 
Given a standard graded Artinian $K$-algebra $A=R/I$, where $R=K[x_1,\dots,x_n]$ and $I$ is a homogeneous ideal of $R$,
we denote the Hilbert function of $A$ by $\HF_A\colon \mathbb{Z} \longrightarrow \mathbb{Z}$, with $\HF_A(j)=\dim _K[A]_j=\dim _K[R/I]_j$, and the Hilbert series of $A$ by $H_A(t)=\sum _i \HF_A(i)t^{i}$. 
Since $A$ is Artinian, its Hilbert function is
encoded in its \emph{$h$-vector} $h=(h_0,\dots ,h_d)$, where $h_i=\HF_A(i)>0$ and $d$ is the largest index with this property. The integer $d$ is called the \emph{socle degree of} $A$ or the regularity of $A$, and it is denoted by $\reg(A)$. We refer to the number $n=\dim(R)$, which is equal to the height or codimension of $I$, also as the {\em codimension} of $A$.

 \subsection{Gorenstein algebras and Macaulay duality}\label{s: Macaulay duality}

A graded Artinian $K$-algebra $A$  with socle degree $d$ is said to be  {\em Gorenstein} if its
socle $(0:\m_A)$ is a one dimensional $K$-vector space whose elements have degree $d$ where $\m_A=(x_1,\ldots,x_n)A$.

Let $R=K[x_1,\ldots,x_n]$ be a polynomial ring and let $R'=K[X_1,\ldots,X_n]$ be a divided power algebra (see \cite[Appendix A]{IK} or \cite[Appendix A.2.4]{Eisenbud}), regarded as a $R$-module with the \emph{contraction} action
\[
x_i\circ X_j^k=
\begin{cases}X_j^{k-1}\delta_{ij} \  & \text{if} \ k>0\\ 0 & \text{otherwise,}\\
\end{cases}
\]
where $\delta_{ij}$ is the Kronecker delta. We regard $R$ as a graded $K$-algebra with $\deg X_i=\deg x_i$.

For each degree $i\geq 0$, the action of $R$ on $R'$ defines a non-degenerate $K$-bilinear pairing
\begin{equation}
\label{eq:MDPairing}
    R_i \times R'_i \longrightarrow K \text{ with } (f,F) \longmapsto f \circ F.
\end{equation}
Thus for each $i\geq 0$ we have an isomorphism of $K$-vector spaces $R'_i\cong \Hom_K(R_i,K)$ given by $F\mapsto\left\{f\mapsto f\circ F\right\}$.

 It is well known by work of F.\,S.\,Macaulay \cite{Macaulay} that an Artinian $K$-algebra $A=R/I$ is Gorenstein with socle degree $d$ if and only if  $$I=\Ann_R(F)=\{f\in R\mid f\circ F=0\}$$ for some homogeneous polynomial $F\in R'_d$.  The polynomial $F$ is called  {\em Macaulay dual generator} for $A$; it is unique up to a scalar multiple. To emphasize the relationship between $F$ and the Artinian Gorenstein ring defined by its annihilator, we write $A_F=R/\Ann_R(F)$. 

 We now recall a well-known result on the structure of Artinian Gorenstein rings of codimension three due to Buchsbaum-Eisendbud \cite{BE}.

\begin{theorem}[{\cite[Theorem 2.1]{BE}}]\label{lem:BE}
Let $R$ be a Cohen-Macaulay local or graded ring with maximal ideal $\mathfrak{m}$ and let $I$ be an ideal of $R$ of height three. Then $R/I$ is Gorenstein if and only if there is an odd integer $n\geq 3$ such that $I$ is minimally generated by the $(n-1)$--st order pfaffians of some $n \times n$ skew-symmetric matrix with entries in $\mathfrak{m}$.
\end{theorem}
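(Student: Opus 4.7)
The plan is to prove the two directions of the equivalence separately. Both arguments rest on the self-duality imposed on minimal free resolutions by the Gorenstein property, combined with the pfaffian analogue of the Laplace expansion for determinants.

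For the ``if'' direction, suppose $M$ is an $n\times n$ skew-symmetric matrix with entries in $\mathfrak{m}$, with $n$ odd, whose submaximal pfaffians $p_1,\dots,p_n$ generate $I$. I would assemble the complex
$$0\longrightarrow R \xrightarrow{\phi} R^n \xrightarrow{M} R^n \xrightarrow{\phi^T} R \longrightarrow R/I \longrightarrow 0,$$
where $\phi=(p_1,-p_2,\dots,p_n)^T$. The identity $M\phi=0$, which is the pfaffian analogue of the Laplace expansion, ensures this is a complex. Exactness follows from the Buchsbaum--Eisenbud acyclicity criterion: the height-three hypothesis on $I$ together with Cohen--Macaulayness of $R$ supplies the depth conditions on the Fitting ideals of $\phi$ and $M$ needed to apply the criterion. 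The complex is manifestly self-dual under $M^T=-M$, which exhibits $R/I$ as Gorenstein.

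For the converse, I would take a minimal free resolution
$$0 \longrightarrow F_3 \longrightarrow F_2 \xrightarrow{\psi} F_1 \longrightarrow R \longrightarrow R/I \longrightarrow 0.$$
The Gorenstein property forces $F_3\cong R$ and furnishes an isomorphism between the resolution and its shifted $R$-dual. Minimality then gives $F_1\cong F_2\cong R^n$ and identifies the outer differentials as transposes of one another. Choosing bases realizing the self-duality, one finds that $\psi$ coincides with its transpose up to a sign; fixing the sign to be negative by compatibility with the double-dual identification yields a skew-symmetric presentation of $\psi$. The outer maps must then be given by pfaffians, since they annihilate $\psi$ on either side. The parity constraint ($n$ odd) is forced because a skew-symmetric matrix of even size has determinant equal to the square of its pfaffian, which would contradict the rank drop demanded by the resolution.

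The main obstacle I anticipate lies in the self-duality step of the converse: passing from the abstract isomorphism between the resolution and its dual to a concrete alternating matrix representative of $\psi$. The self-duality isomorphism is only unique up to scalar, so turning it into the matrix identity $\psi^T=-\psi$ requires arguing that the scalar can be taken to be $-1$ rather than $+1$, ruling out a symmetric presentation. In characteristic two, one must further arrange for the diagonal entries of $\psi$ to vanish, since otherwise the notion of pfaffian is not well-defined. Both refinements should be achievable by exploiting minimality, which keeps the entries of $\psi$ inside $\mathfrak{m}$ and thus leaves enough flexibility for base changes that do not disturb the resolution.
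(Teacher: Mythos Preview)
The paper does not prove this theorem. It is quoted as background from Buchsbaum--Eisenbud \cite{BE} and used as a tool in \cref{prop: gens family4} and \cref{key_doubling}, but no proof is supplied. So there is nothing in the paper to compare your proposal against.

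That said, your sketch is a fair outline of the original Buchsbaum--Eisenbud argument. The ``if'' direction is essentially as you describe: one builds the self-dual complex from the pfaffian Laplace identity and verifies exactness via the acyclicity criterion, checking that the depth of the pfaffian ideal is at least three. For the converse, you have correctly located the real difficulty. The abstract self-duality of the minimal resolution gives an isomorphism $\alpha\colon F_1 \to F_2^*$ with $\psi^* \alpha = \pm \alpha \psi$, but upgrading this to a genuinely alternating presentation of $\psi$ is the crux of \cite{BE}. Buchsbaum and Eisenbud handle it by a careful analysis of the possible symmetry types of a self-dual complex over a local ring, ruling out the symmetric case by examining what happens after reducing modulo $\mathfrak{m}$; your suggestion that ``minimality\ldots leaves enough flexibility for base changes'' is the right instinct but would need to be made precise along those lines. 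Your parity argument is slightly off: the oddness of $n$ is not deduced from the determinant of $\psi$ but rather from the fact that an alternating form on a free module of odd rank has even rank, so rank $n-1$ forces $n$ odd once the alternating structure is in hand.
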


\subsection{Connected sums}

For any Artinian Gorenstein (or AG)  graded $K$-algebra  $A$ with socle degree $d$  and for any non-zero (surjective) morphism of graded vector spaces $f_A\colon A\to K(-d)$, known as an {\it  orientation} of $A$,
there is a pairing
\begin{equation}\label{pairing}
A_{i}\times A_{d-i}\to K \text{ defined by } (a_i,a_{d-i})\mapsto f_A(a_ia_{d-i})
\end{equation}
 which
is non-degenerate. We call the pair $(A,f_A)$ an
{\em oriented AG $K$-algebra}. 

A choice of orientation on $A$ corresponds to a choice of Macaulay dual generator in the following way. Every orientation on $A$ can be written as the function 
$$
f_A \colon A\to K 
\quad \mbox{defined by}\quad  
f_A(g)\mapsto (g\circ F)(0)
$$ 
for some Macaulay dual generator $F$ of $A$, where the notation $(g\circ F)(0)$ refers to evaluating the element $g\circ F$ of $R'$ at $X_1=\cdots=X_n=0$.

Let $(A,f_A)$ and $(T,f_T)$ be two oriented AG $K$-algebras with a degree-preserving map  $\pi\colon A \to T$ and 
$\reg(A)=d \geq \reg(T)=k$.
By  {\cite[Lemma 2.1]{IMS}},
there exists a unique homogeneous element $\tau_A \in A_{d-k}$, called the {\em Thom class} for $\pi\colon A \to  T$, such that 
$$
f_A(\tau _A a)=
f_T(\pi (a)) \qforall a\in A.
$$
Note
that the  Thom class for $\pi\colon A \to  T$ depends not only on the map $\pi $, but also on the orientations chosen for $A$ and $T$.

\begin{example}
 Let $(A,f_A)$ be an oriented AG $K$-algebra with socle degree $\reg(A)=d$. Consider $(K,f_K)$ where $f_K\colon K\to K$ is the identity map. Then the Thom class for the canonical projection $\pi\colon A\to K$ is the unique element $s\in A_d$ such that $f_A(s)=1$.
\end{example}

For oriented AG $K$-algebras 
$$(A,f_A), (B,f_B), (T,f_T)
\qwith
 \reg(A)=\reg(B)=d 
 \qand 
 \reg(T)=k,
 $$ let
$$
\pi_A \colon A \to  T 
\qand 
\pi_B \colon B \to T
$$ 
be surjective graded $K$-algebra morphisms with Thom classes 
$\tau_A\in A_{d-k}$ and $\tau_B\in B_{d-k}$, respectively.
From this data, one forms the fiber product algebra $A\times_TB$ as the categorical pullback of $\pi_A,\pi_B$ in the category of graded $K$-algebras.
We assume that 
$$
\pi_A(\tau_A) = \pi_B(\tau_B), 
\quad \mbox{so that} \quad 
(\tau_A,\tau_B) \in A \times _T B.
$$
\begin{definition}\label{def: connected sum}
The {\em connected sum} $A \#_TB$ of  $A$ and $B$ over $T$ is the quotient ring of the fiber product
$$
A\times _T B = 
\bigl\{(a, b) \in A\oplus B \mid  \pi_A(a) = \pi _B(b) \bigr\}
$$
by the principal ideal generated by the pair of Thom classes
$(\tau_A, \tau_B)$; in other words,
$$ 
A \#_TB =
(A \times_T B)/
\langle (\tau_A, \tau_B) \rangle.
$$
\end{definition}
By \cite[Lemma 3.7]{IMS}, the connected sum $A \#_TB$ is  characterized by the following exact sequence of vector spaces:
\begin{equation}\label{exactCS}
    0 \to  T(k - d) \to  A \times _T B \to  A\# _T B \to  0.
\end{equation}
Therefore, the Hilbert series of the connected sum satisfies
\begin{equation}\label{HilbertCS}
    H_{A\# _T B}( t) = H_A( t) + H_B( t)- (1 + t^{d-k})H_T( t).
\end{equation}
\subsection{Lefschetz properties}  
\begin{definition}\label{def: WLP}
Let $A=R/I$ be a graded Artinian $K$-algebra. We say that $A$ has the {\em Weak Lefschetz property} (WLP, for short)
if there is a linear form $\ell \in [A]_1$ such that, for all
integers $i\ge0$, the multiplication map
\[
\times \ell\colon [A]_{i}  \longrightarrow  [A]_{i+1}
\]
has maximal rank (i.e., it is either injective or surjective).
 In this case, the linear form $\ell$ is called a {\em Lefschetz
element} of $A$. If for the general form $\ell \in [A]_1$ and for an integer $j$ the
map $\times \ell:[A]_{j-1}  \longrightarrow  [A]_{j}$ does not have maximal rank, we will say that the ideal $I$ fails the WLP in
degree $j$.
\end{definition}
\begin{definition}
$A$ has the {\em Strong Lefschetz property} (SLP, for short) if there is a linear form $\ell \in [A]_1$ such that, for all
integers $i\ge0$ and $k\ge 1$, the multiplication map
\[
\times \ell^k\colon [A]_{i}  \longrightarrow  [A]_{i+k}
\]
has maximal rank.  Such an element $\ell$ is called a {\em Strong Lefschetz element} for $A$.
\end{definition}
To determine whether an Artinian standard graded $K$-algebra $A$ has the WLP/SLP seems a simple problem of linear algebra, but instead, it has proven to be extremely elusive. 
Part of the great interest in the WLP stems from the ubiquity of its presence and there is a long series of papers determining classes of Artinian algebras holding/failing the WLP/SLP, but much more work remains to be done (see \cite{JMR}).

 Due to their duality properties, the WLP for Artinian Gorenstein algebras is determined by a single map.

\begin{lemma}[{\cite[Proposition 2.1]{MMN}}]\label{mid map}
    Let $A$ be an AG algebra with $\reg(A)=d$. Then the following are equivalent 
    \begin{enumerate}[label=(\roman*),itemsep=0.1cm,topsep=0.1cm]
        \item $\ell\in A_1$ is a weak Lefschetz element on $A$ 
        \item the map $\times \ell: A_{\lfloor\frac{d-1}{2}\rfloor} \to A_{\lfloor\frac{d-1}{2}\rfloor+1}$ has maximum rank (is injective) 
        \item the map $\times \ell: A_{\lfloor\frac{d}{2}\rfloor} \to A_{\lfloor\frac{d}{2}\rfloor+1}$ has maximum rank (is surjective).
        \end{enumerate}
\end{lemma}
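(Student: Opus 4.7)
The plan is to establish the chain $(i) \Rightarrow (ii) \Leftrightarrow (iii) \Rightarrow (i)$. The implication $(i) \Rightarrow (ii)$ is immediate from \cref{def: WLP}, so the substance lies in $(ii) \Leftrightarrow (iii)$ and $(ii) \Rightarrow (i)$. The two main ingredients will be (a) the Gorenstein duality supplied by the Macaulay pairing \eqref{pairing}, which identifies $\times\ell\colon A_i\to A_{i+1}$ with the transpose of $\times\ell\colon A_{d-i-1}\to A_{d-i}$, and (b) the observation that the kernel of $\times\ell$ can only grow as one moves up in degree, because the socle of an AG algebra is concentrated in degree $d$.

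For $(ii) \Leftrightarrow (iii)$ I would use the non-degenerate pairing $A_j \times A_{d-j} \to K$, $(a,b)\mapsto f_A(ab)$. Associativity and commutativity of $A$ yield $f_A((\ell a)b) = f_A(a(\ell b))$, so under this pairing the transpose of $\times\ell\colon A_i \to A_{i+1}$ is $\times\ell\colon A_{d-i-1}\to A_{d-i}$. Specializing to $i=\lfloor (d-1)/2\rfloor$, the identity
\[
d - \lfloor (d-1)/2\rfloor - 1 = \lfloor d/2\rfloor
\]
shows the dual is precisely the map appearing in (iii), and a linear map is injective if and only if its transpose is surjective.

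For $(ii) \Rightarrow (i)$, set $K_j = \ker\bigl(\times\ell\colon A_j \to A_{j+1}\bigr)$. If $0\ne a\in K_j$ with $j<d$, then $a\notin \Soc(A)$, so some $\ell'\in A_1$ satisfies $\ell' a \ne 0$; commutativity gives $\ell(\ell'a) = \ell'(\ell a) = 0$, placing $\ell' a$ in $K_{j+1}\setminus\{0\}$. Iterating, $K_j\ne 0$ with $j<d$ forces $K_i\ne 0$ for every $i\in[j,d]$, so contrapositively $K_{\lfloor (d-1)/2\rfloor}=0$ implies $K_i=0$ for all $i\le \lfloor (d-1)/2\rfloor$, giving injectivity of $\times\ell$ in those degrees. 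Dualizing each of these maps through the Macaulay pairing (as in the previous paragraph) then produces surjectivity of $\times\ell\colon A_j \to A_{j+1}$ for every $j\ge\lfloor d/2\rfloor$.

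The only subtlety I anticipate is the floor-function bookkeeping: one must check that the ``injective below'' and ``surjective above'' ranges cover $\{0,1,\ldots,d-1\}$ with no gap. The identity $\lfloor (d-1)/2\rfloor + \lfloor d/2\rfloor = d-1$, valid in both parities, is exactly what makes the two ranges tile the interval (with a harmless overlap at the middle degree when $d$ is odd), completing the verification of the weak Lefschetz property.
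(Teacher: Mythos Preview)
The paper does not supply its own proof of this lemma; it is quoted verbatim from \cite[Proposition 2.1]{MMN} without argument. Your proof is correct and is in fact the standard one: the Gorenstein pairing makes $\times\ell\colon A_{d-i-1}\to A_{d-i}$ the transpose of $\times\ell\colon A_i\to A_{i+1}$, and the socle being concentrated in degree $d$ forces kernels to propagate upward. The floor identity $\lfloor(d-1)/2\rfloor+\lfloor d/2\rfloor=d-1$ is exactly what ensures the injective and surjective ranges cover all degrees, and you handle it cleanly.

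One small point worth making explicit: you treat (ii) as asserting injectivity and build $(ii)\Rightarrow(i)$ from $K_{\lfloor(d-1)/2\rfloor}=0$. Strictly speaking the hypothesis is ``maximum rank,'' and the parenthetical ``(is injective)'' is part of the conclusion, not an a priori fact (AG algebras need not have unimodal Hilbert function). Your tools already handle this: if the map in (ii) had maximal rank but were only surjective, its transpose in (iii) would be injective, and then your kernel-propagation argument run from degree $\lfloor d/2\rfloor$ downward would force the map in (ii) to be injective after all. So nothing is missing, but a sentence acknowledging this loop would make the argument airtight as written.
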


The next lemma allows us to relate the Lefschetz property for a graded algebra to the Lefschetz property of the quotient algebra defined by its initial ideal.

\begin{lemma}[{\cite[Proposition 2.9.]{Wiebe}}]\label{lem:Wiebe}
    Let $I\subset R$ be a graded ideal primary to the homogeneous maximal ideal, and let $J$ be the initial ideal of $I$ with respect to some term order. 
    If $R/J$ has WLP (respectively, SLP), then the same holds for $R/I$.
\end{lemma}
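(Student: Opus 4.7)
The plan is to combine a Gröbner degeneration with lower semi-continuity of rank in flat families. The statement essentially says that being a Lefschetz element is a generic condition that can be ``lifted'' from the initial ideal back to $I$.

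First I would invoke the standard Gröbner degeneration associated with the chosen term order: since $J=\mathrm{in}_{<}(I)$, there is a flat one-parameter family of graded ideals $\{I_t\}_{t\in \mathbb{A}^1_K}$ in $R$ with $I_t$ isomorphic to $I$ as a graded ideal for every $t\neq 0$ and $I_0=J$. Flatness forces the Hilbert functions of the fibres to coincide, so $\dim_K(R/I_t)_i$ is independent of $t$ for every $i$. For a fixed linear form $\ell\in R_1$, the multiplication maps
$$\mu_{t,i}^{(\ell)} \colon (R/I_t)_i \longrightarrow (R/I_t)_{i+1}$$
can be represented by matrices whose entries depend polynomially on $t$ once bases compatible with the flat family are fixed (for instance, standard monomial bases for the chosen term order). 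Consequently $\rank\bigl(\mu_{t,i}^{(\ell)}\bigr)$ is lower semi-continuous in $t$, so $\rank\bigl(\mu_{t,i}^{(\ell)}\bigr)\geq \rank\bigl(\mu_{0,i}^{(\ell)}\bigr)$ for $t$ in a dense Zariski-open subset of $\mathbb{A}^1_K$.

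Now suppose $R/J$ has WLP. The set of weak Lefschetz elements is a non-empty Zariski-open subset of the linear forms, so I can choose a general $\ell\in R_1$ whose image in $R/J$ is a Lefschetz element. Then $\rank\bigl(\mu_{0,i}^{(\ell)}\bigr)=\min\bigl(\dim_K(R/J)_i,\dim_K(R/J)_{i+1}\bigr)$ for every $i$, and by flatness this equals $\min\bigl(\dim_K(R/I)_i,\dim_K(R/I)_{i+1}\bigr)$, which is also the maximum possible rank of $\mu_{t,i}^{(\ell)}$ for any $t$. Semi-continuity then forces $\rank\bigl(\mu_{t,i}^{(\ell)}\bigr)$ to attain this maximum for all $i$ and all $t$ in a dense open set, in particular for $t=1$, i.e.\ for $R/I$. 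Thus $\ell$ is a weak Lefschetz element for $R/I$. The SLP case follows by the same argument applied to the multiplication-by-$\ell^k$ maps $(R/I_t)_i\to(R/I_t)_{i+k}$, taking a common open set of admissible $\ell$ by intersecting finitely many non-empty Zariski-open subsets of $R_1$ (one per pair $(i,k)$ where the map is not already forced to have maximal rank by the trivial bound).

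The main thing to be careful about is the \emph{direction} of semi-continuity: rank is lower semi-continuous in algebraic families, which is precisely what allows one to push a maximal-rank property from the special fibre $R/J$ to the generic fibre $R/I$, but would preclude the converse implication. The only other mild point is to ensure that a single generic linear form works simultaneously for all relevant degrees $i$ (and all exponents $k$ in the SLP case); this is handled by intersecting finitely many non-empty Zariski-open loci in $R_1$.
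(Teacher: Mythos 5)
Your strategy, Gr\"obner degeneration followed by lower semi-continuity of rank, is exactly the standard route and in fact the route of the cited reference, so the overall plan is sound. The one step that does not hold up as written is the deduction ``in particular for $t=1$''. Lower semi-continuity gives that, for the fixed $\ell$, the rank of $\mu_{t,i}^{(\ell)}$ is maximal on a dense Zariski-open subset $W\subset\mathbb{A}^1_K$; this $W$ is the complement of the vanishing locus of a polynomial in $t$, and nothing forces $1\in W$ even though $0\in W$. You also cannot argue that the rank is constant on $\mathbb{A}^1\setminus\{0\}$: the graded isomorphism $R/I_t\cong R/I$ for $t\neq 0$ is implemented by a torus rescaling $x_j\mapsto t^{w_j}x_j$, which does \emph{not} fix $\ell$, so for a fixed $\ell$ the rank of $\mu_{t,i}^{(\ell)}$ can genuinely drop at special nonzero values of $t$, possibly including $t=1$.

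The correct finish is short: pick any $t_0\in W\setminus\{0\}$ (this is possible since $W$ is open and dense and the field is infinite in the paper's setting), so that $\ell$ is a weak (resp.\ strong) Lefschetz element for $R/I_{t_0}$, and then transport it through the graded isomorphism $\phi_{t_0}\colon R/I\xrightarrow{\;\sim\;} R/I_{t_0}$ to obtain the Lefschetz element $\phi_{t_0}^{-1}(\ell)\in R_1$ for $R/I$. With that replacement your argument is complete; the rest is fine, including the passage to finitely many degrees $i$ (and, for SLP, finitely many pairs $(i,k)$), which is justified because $I$ is $\mathfrak{m}$-primary so $R/I$ is Artinian.
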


In view of Lemma \ref{lem:Wiebe}, to determine whether $R/I$ has  the SLP or WLP, one can assume that $I$ is a monomial ideal. 
In \cite{Chase,CookNagel}, the authors have described a family of monomial ideals that have the SLP or WLP. 
We recall the following result, which will be used later.
\begin{theorem}[{\cite[Theorem 4.1]{Chase}}]\label{thm: Chase}
Let $I=\langle x^a,y^b,z^c,x^\alpha z^\gamma,y^\beta z^\gamma\rangle$ be an Artinian monomial ideal in $S=K[x,y,z]$, where $0<\alpha<a,$ and $0<\gamma <c$. 
Then $S/I$ has the Strong Lefschetz property if any of the following conditions hold:
\begin{enumerate}[itemsep=0.15cm,label=(\arabic*)]
    \item $\alpha+\beta-1 \leq a+b-c \leq \alpha+\beta+1;$
    \item $\min\{\alpha,\beta\} \neq \max\{\alpha,\beta\}=\min\{\alpha+\beta,a,b\}$, and \\
    $\max\{\alpha,\beta\}-\gamma-1 \leq a+b-c \leq \max\{\alpha,\beta\}-\gamma+1;$
    \item $\min\{\alpha,\beta\} < \max\{\alpha,\beta\} \leq 2$, and $a+b+\gamma \leq c+2.$
\end{enumerate}
    
\end{theorem}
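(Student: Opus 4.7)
The plan is to verify the SLP for $A = S/I$ by exhibiting a Lefschetz element $\ell$ and controlling the ranks of the maps $\times \ell^k$ on each graded piece. Because $I$ is already monomial, the standard basis of $A$ splits cleanly into two disjoint boxes
\[
\mathcal{B}_1 = \{\,x^iy^jz^k : 0 \le i < a,\ 0 \le j < b,\ 0 \le k < \gamma\,\}, \ \
\mathcal{B}_2 = \{\,x^iy^jz^k : 0 \le i < \alpha,\ 0 \le j < \beta,\ \gamma \le k < c\,\},
\]
reflecting the decomposition $I = (x^a,y^b,z^c) + z^\gamma(x^\alpha, y^\beta)$. First I would record the Hilbert function of $A$ as the sum of the generating functions of these two boxes, locate the degree(s) at which it attains its maximum, and check that in each of the three regimes (1)--(3) the Hilbert function has the unimodal shape required for the SLP to even be attainable.

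The second step is to pick the candidate Lefschetz element $\ell = x+y+z$ (after extending scalars if necessary, and relying on characteristic zero so that the Stanley--Watanabe theorem applies). The SLP then reduces to verifying that, for each pair $(i,k)$, the matrix of $\times \ell^k$ with respect to the standard monomial basis has maximal rank. With the basis partitioned as $\mathcal{B}_1 \sqcup \mathcal{B}_2$, this matrix itself decomposes into three types of blocks: movement within $\mathcal{B}_1$, movement within $\mathcal{B}_2$, and the ``cross-block'' piece sending elements of $\mathcal{B}_1$ up into $\mathcal{B}_2$. The first two blocks are restrictions of multiplication on a monomial complete intersection and so have maximal rank by \cite{stanley2,watanabe}; all of the real content is in the cross-block, which following the lozenge-tiling / non-intersecting lattice path paradigm of Cook--Nagel can be interpreted as a signed enumeration over the region obtained from a rectangular parallelepiped by removing a smaller parallelepiped at one corner.

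The last step is the case analysis. Each of the three hypotheses controls the geometry of the interface between $\mathcal{B}_1$ and $\mathcal{B}_2$: condition (1), namely $|(a+b-c)-(\alpha+\beta)|\le 1$, places the two boxes almost symmetrically around the midpoint of the $h$-vector, so that the cross-block contribution to the rank is perfectly balanced; condition (2) degenerates one side of $\mathcal{B}_2$ to coincide with the corresponding side of $\mathcal{B}_1$, reducing the cross-block problem to a lower-dimensional one; and condition (3) forces $\max(\alpha,\beta)\le 2$, so $\mathcal{B}_2$ is at most a pair of parallel lattice lines and the cross-block matrix is small enough to be analysed by direct linear algebra. The main obstacle is precisely the analysis of this cross-block map, and the sharpness of the inequalities in (1) and (2) reflects the fact that even a single extra unit of offset between the two boxes generically introduces a rank defect that cannot be absorbed by the complete-intersection parts, which is exactly why only these three regimes admit a clean proof.
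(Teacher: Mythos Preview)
The paper does not prove this statement. Theorem~\ref{thm: Chase} is quoted from \cite[Theorem~4.1]{Chase} and used as a black box (in the proof of \cref{thm: family 6}, case~\eqref{Prop4.2-case4}\&\eqref{Prop4.2-case3}); there is no argument in the present paper to compare your proposal against.

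As a standalone attempt, what you have written is a reasonable outline of the strategy in \cite{Chase} and \cite{CookNagel}, but it is not a proof. You correctly decompose the monomial basis into the two boxes $\mathcal{B}_1$ and $\mathcal{B}_2$, and you correctly identify that the diagonal blocks of $\times\ell^k$ are governed by the Stanley--Watanabe theorem while ``all of the real content is in the cross-block.'' But then you stop: in your case analysis you only \emph{describe} what each hypothesis (1)--(3) does to the interface between the boxes, you never actually establish that the resulting map has full rank. Phrases such as ``the cross-block contribution to the rank is perfectly balanced'' or ``small enough to be analysed by direct linear algebra'' are assertions, not arguments. The actual work in \cite{Chase} consists precisely of carrying out this cross-block rank computation via the lattice-path/tiling machinery, and the specific inequalities in (1)--(3) emerge from that computation rather than from the qualitative geometric picture you sketch. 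So the gap is not that your approach is wrong --- it is the intended one --- but that the step you flag as ``the main obstacle'' is left entirely undone.
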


\section{Artinian Gorenstein algebras with binomial Macaulay dual generator}

 We first establish that  when the residue field contains roots of unity, we may assume that the dual generators do not involve any coefficients other than $\pm 1$.
 
\begin{lemma}\label{lem: no constants}
Assume that $K$ is algebraically closed (or at least that it contains all roots of unity) and $R'=K[X_1,\ldots,X_n]$. Then for any binomial $G\in R'$ there is a binomial $F=m_1-m_2\in R'$ so that $m_1,m_2$ are monomials and $A_G\cong A_F$.
\end{lemma}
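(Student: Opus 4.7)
The strategy is to absorb one coefficient of $G$ into a scalar rescaling of $G$ (which does not affect $\Ann_R(G)$) and to eliminate the other by a diagonal change of variables in $R'$. Write $G = a \cdot X^\alpha + b\cdot X^\beta$ with $a,b\in K^\ast$ and $\alpha\ne\beta$; replacing $G$ by $a^{-1}G$ one may assume $G = X^\alpha + \mu X^\beta$ where $\mu = b/a\in K^\ast$. It then suffices to produce constants $c_1,\ldots,c_n\in K^\ast$ such that the substitution $X_i\mapsto c_iX_i$ sends $G$ to a scalar multiple of $X^\alpha - X^\beta$.

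The key technical input is the fact that any such diagonal automorphism $\phi\colon R'\to R'$ with $\phi(X_i)=c_iX_i$ induces a $K$-algebra isomorphism $A_F\cong A_{\phi(F)}$ for every homogeneous $F\in R'$. I would prove this by introducing the companion automorphism $\psi\colon R\to R$ defined by $\psi(x_i)=c_i^{-1}x_i$ and verifying on monomials that the contraction action satisfies
\[
\psi(f)\circ\phi(F) \;=\; \phi(f\circ F) \qquad \text{for all } f\in R,\; F\in R'.
\]
Since $\phi$ is a graded $K$-linear isomorphism, this identity forces $\psi(\Ann_R(F))=\Ann_R(\phi(F))$, so $\psi$ descends to the desired isomorphism of $K$-algebras. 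The subtle point is the appearance of an inverse in the companion action: because the contraction pairing is not the standard symmetric-algebra action, the correct companion on $R$ must invert the scaling parameters $c_i$ rather than reproduce them.

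With this tool in place, constructing $F$ reduces to a single monomial equation: we need $\phi(G) = c^\alpha X^\alpha + \mu c^\beta X^\beta$ to be proportional to $X^\alpha - X^\beta$, i.e.,
\[
c^{\beta-\alpha} \;=\; -\mu^{-1}.
\]
Since $\alpha\ne\beta$, some coordinate $\gamma_i := \beta_i - \alpha_i$ is nonzero; setting $c_j = 1$ for $j\ne i$ reduces the problem to solving $c_i^{\gamma_i} = -\mu^{-1}$ (or $c_i^{|\gamma_i|} = -\mu$ if $\gamma_i<0$) in $K$, which admits a solution whenever $K$ is algebraically closed. The main obstacle I expect is the duality check in the previous paragraph: everything else reduces to bookkeeping and a routine root extraction, the latter needing only minor adjustment in the weaker setting where $K$ merely contains the roots of unity needed to solve the relevant monomial equation.
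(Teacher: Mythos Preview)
Your proposal is correct and follows essentially the same approach as the paper: rescale the dual generator to remove one coefficient, then scale a single variable $X_i$ whose exponents differ in the two monomials to kill the remaining coefficient. The paper simply asserts that ``this change of coordinates provides an isomorphism $A_G\cong A_F$'' without further justification, whereas you spell out the companion automorphism $\psi(x_i)=c_i^{-1}x_i$ on $R$ and the compatibility identity $\psi(f)\circ\phi(F)=\phi(f\circ F)$, which is a welcome addition (and, incidentally, the same inversion would arise for the ordinary partial-derivative action, so the contraction structure is not really the source of the inverse).
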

\begin{proof}
Since the Macaulay dual generator is only unique up to scalar multiple, we may assume $G=m_1+\theta m_2$ where $m_1,m_2$ are monomials and $\theta\in K$. Pick a variable, say $X_i$, so that the exponent $u$ of $X_i$ in $m_1$ is different from the exponent $v$ of $X_i$ in $m_2$. Set $\lambda$ to be a $(v-u)$th root of $-\theta^{-1}$ and observe that the change of coordinates mapping $X_i\mapsto \lambda X_i$ takes $G$ to $\lambda^u(m_1-m_2)$. Thus this change of coordinates provides an isomorphism $A_G\cong A_F$.
\end{proof}

In view of \cref{lem: no constants} henceforth when we say $F$ is a binomial we shall always mean that for a pair of monomials  $m_1, m_2 \in R '= K[X_1,\dots ,X_n]$, $$F=m_1-m_2$$ 
and, if necessary, we will assume that $K$ is algebraically closed.

When  $m_1$ and $m_2$ are expressed in disjoint sets of variables, as is the case in \cref{l:gcd1}, then \cite[Proposition 4.19]{IMS} shows that $A_{m_1-m_2}=A_{m_1}\#_K A_{m_2}$.
Moreover, it is shown in \cite[Proposition 5.7]{IMS} that if $A$ and $B$ satisfy the SLP and  have the same socle degree, then $A\#_K B$ also satisfies the SLP. We will see that this is no longer true when  the connected sum is not taken over a field (see, for example, \cref{e:failure}). 
The following statement follows.

\begin{lemma}\cite[Proposition 3.77]{LefscetzBook}\label{l:gcd1}
    Let $A_F$ be an Artinian Gorenstein algebra of codimension $c\ge 3$ with  dual generator a binomial $F=m_1-m_2$. If $\gcd(m_1,m_2)=1$, then $A_F$ satisfies the SLP.
\end{lemma}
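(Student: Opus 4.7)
The plan is to recognize this lemma as a direct synthesis of the two connected-sum results cited in the paragraph immediately preceding the statement: \cite[Proposition 4.19]{IMS} realizes $A_{m_1-m_2}$ as a connected sum over $K$ whenever the two monomials involve disjoint variables, while \cite[Proposition 5.7]{IMS} guarantees that the connected sum over $K$ of two SLP algebras of equal socle degree again satisfies the SLP. So all I need to do is bridge the hypothesis $\gcd(m_1,m_2)=1$ to the disjoint-variable setup required by those two results, and then check the equal-socle-degree hypothesis.

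The first step is to verify that the hypotheses force $m_1$ and $m_2$ to actually use disjoint sets of variables. Because $A_F$ has codimension $c$, every variable of the ambient polynomial ring $R = K[x_1,\dots,x_c]$ must appear in $F = m_1 - m_2$; otherwise $\Ann(F)$ would contain a variable and the codimension would drop. Combined with $\gcd(m_1,m_2)=1$, this shows that each variable appears in exactly one of $m_1$ or $m_2$, so the two monomials are supported on a disjoint partition of the variables.

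With the disjoint-variable condition in hand, applying \cite[Proposition 4.19]{IMS} yields the decomposition
$$A_F \;=\; A_{m_1} \,\#_K\, A_{m_2}.$$
Since $F$ is homogeneous of degree $d$, both $m_1$ and $m_2$ have degree $d$, so $A_{m_1}$ and $A_{m_2}$ are monomial complete intersections sharing the common socle degree $d$, and both satisfy the SLP by the Stanley--Watanabe theorem recalled in item (3) of the introduction (assuming $\mathrm{char}(K)=0$, as is implicit here). The proof then concludes by invoking \cite[Proposition 5.7]{IMS} with $A = A_{m_1}$ and $B = A_{m_2}$ to deduce that $A_{m_1}\#_K A_{m_2} = A_F$ satisfies the SLP. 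There is no substantive obstacle: the entire argument hinges on the single observation that the coprimality of $m_1$ and $m_2$, together with the codimension hypothesis, produces exactly the disjoint-variable configuration needed to activate the first cited result.
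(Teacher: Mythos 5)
Your proposal matches the paper's own reasoning exactly: the paragraph preceding the lemma sketches the identical argument, using \cite[Proposition 4.19]{IMS} to decompose $A_F = A_{m_1} \#_K A_{m_2}$ once the coprimality and codimension hypotheses force $m_1,m_2$ onto disjoint variable sets, and then \cite[Proposition 5.7]{IMS} to conclude that this connected sum over $K$ of two equal-socle-degree SLP algebras again has SLP. Your added remarks spelling out why the supports are disjoint and flagging the implicit characteristic-zero assumption are both correct and consistent with the paper.
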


Our next goal is to generalize \cref{l:gcd1} and give large families of binomials $F$ whose associated algebra $A_F$ satisfies WLP. Our main tool will again be the decomposition of $A_F$, under certain circumstances, as connected sums of complete intersection monomial  rings. To this end, we state the following useful decomposition criterion,  which also allows us to give a formula for the Hilbert function of $A_F$ whenever it is decomposable as a connected sum.

\begin{proposition}\label{key} 
   Let $g,m_1, m_2$ be monomials in $R'$. Let $A_F$ be an Artinian Gorenstein algebra  with Macaulay dual generator a homogeneous binomial $$F=g(m_1-m_2) \qwhere  \gcd(m_1,m_2)=1, \quad m_1\nmid g \qand m_2\nmid g.$$ Suppose that 
    $$
        g\cdot m_1 = X_1^{a_1}\cdots X_n^{a_n}, \quad 
        g\cdot m_2 = X_1^{b_1}\cdots X_n^{b_n} \qand
        g = X_1^{c_1}\cdots X_n^{c_n}.
    $$
    Then
    \begin{enumerate}
        \item\label{statement1} $A_F=A_{gm_1} \#  _{A_g} A_{gm_2}$,
        \item\label{statement2}  the Hilbert series of $A_F$ is
    \[
    H_{A_F}(t)=\frac{\prod_{i=1}^n(1-t^{a_i+1})+\prod_{i=1}^n(1-t^{b_i+1})-(1+t^{d-k})\prod_{i=1}^n(1-t^{c_i+1})}{(1-t)^n}.
    \]
    \end{enumerate}
\end{proposition}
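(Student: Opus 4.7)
The plan for part (1) is to realize the connected sum $A_{gm_1} \#_{A_g} A_{gm_2}$ as an explicit quotient of $R$ and match it with $A_F$. First I verify the connected sum is well-defined. Since $g \mid gm_i$ in $R'$, both $\Ann(gm_1) = (x_1^{a_1+1},\ldots,x_n^{a_n+1})$ and $\Ann(gm_2) = (x_1^{b_1+1},\ldots,x_n^{b_n+1})$ sit inside $\Ann(g) = (x_1^{c_1+1},\ldots,x_n^{c_n+1})$, yielding surjections $\pi_i\colon A_{gm_i} \to A_g$. Using the orientations induced by the Macaulay dual generators $gm_1, gm_2, g$, the contraction identity $m_i \circ gm_i = g$ shows the Thom class of $\pi_i$ is the class of $m_i$ in $A_{gm_i}$. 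The hypothesis $m_i \nmid g$ says some exponent of $m_i$ exceeds the corresponding exponent of $g$, which is equivalent to $m_i \in \Ann(g)$; hence $\pi_i(\tau_i) = 0$ and the compatibility $\pi_1(\tau_1)=\pi_2(\tau_2)$ required to form the connected sum is satisfied.

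Next I would present the fiber product concretely. The coprimality $\gcd(m_1,m_2)=1$ gives $\min(a_j,b_j) = c_j$ for each $j$, whence $\Ann(gm_1) + \Ann(gm_2) = \Ann(g)$. The standard Mayer--Vietoris sequence relating the sum and intersection of these two ideals then identifies the fiber product as $A_{gm_1} \times_{A_g} A_{gm_2} \cong R/(\Ann(gm_1) \cap \Ann(gm_2))$ via $f \mapsto ([f]_{gm_1}, [f]_{gm_2})$. Moreover, $\gcd(m_1,m_2)=1$ together with $m_i \nmid g$ forces $m_i \circ gm_j = 0$ for $i \neq j$, so $m_i \in \Ann(gm_j)$; thus $m_1+m_2 \in R$ is a preimage of the Thom pair $(m_1,m_2)$, and we obtain $A_{gm_1} \#_{A_g} A_{gm_2} \cong R/J$ where $J := (\Ann(gm_1) \cap \Ann(gm_2)) + (m_1+m_2)$.

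To finish (1), I verify $J \subseteq \Ann(F)$. The containment $\Ann(gm_1) \cap \Ann(gm_2) \subseteq \Ann(F)$ is immediate from $F = gm_1-gm_2$, and a direct contraction computation gives $(m_1+m_2) \circ F = g - 0 + 0 - g = 0$. This produces a surjection $R/J \twoheadrightarrow A_F$. Both quotients are Artinian Gorenstein of socle degree $d$---the first because connected sums of oriented AG algebras are AG, the second by hypothesis---so a general principle forces this surjection to be an isomorphism: a graded surjection between AG $K$-algebras of the same socle degree is nonzero on the one-dimensional top degree and therefore injective throughout. Part (2) then follows at once by substituting the Hilbert series $H_{A_{X^a}}(t)=\prod_j(1-t^{a_j+1})/(1-t)$ of the monomial complete intersections $A_{gm_1}, A_{gm_2}, A_g$ into \eqref{HilbertCS}, with $k=\deg g$ and $d-k = \deg m_1 = \deg m_2$.

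The step I expect to be most delicate is producing the lift $m_1+m_2$ of the Thom pair and then using the Gorenstein/socle principle---this is precisely where both the coprimality and the non-divisibility hypotheses enter. As a backup, a direct dimension count for $A_F$ via Macaulay duality should close the argument: by inclusion--exclusion on the supports of $\{x^\alpha \circ F\} \subseteq R'$, one obtains $\dim_K(R \circ F) = \prod_j(a_j+1) + \prod_j(b_j+1) - 2\prod_j(c_j+1)$, matching $\dim_K R/J$ computed from the exact sequence \eqref{exactCS}; combined with $J \subseteq \Ann(F)$, this forces equality of ideals independently of the socle argument.
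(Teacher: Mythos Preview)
Your argument is correct and complete. The paper's own proof is minimal: it cites \cite[Proposition 4.19]{IMS} for part~(1) and invokes \eqref{HilbertCS} for part~(2). What you have written is essentially a self-contained reconstruction of that cited result: you identify the Thom classes as the images of $m_1,m_2$, realize the fiber product as $R/(\Ann(gm_1)\cap\Ann(gm_2))$ via Mayer--Vietoris (using $\gcd(m_1,m_2)=1$ to get $\Ann(gm_1)+\Ann(gm_2)=\Ann(g)$), lift the Thom pair to $m_1+m_2$ (using $m_i\nmid g$ and coprimality to get $m_i\in\Ann(gm_j)$ for $i\neq j$), and conclude by the standard ``surjection of AG algebras with equal socle degree is an isomorphism'' principle---exactly the lemma the paper itself later invokes as \cite[Lemma 3.2]{ADFMMSV1}. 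Your treatment of part~(2) coincides with the paper's. The gain of your approach is that the reader sees precisely where each hypothesis ($\gcd(m_1,m_2)=1$, $m_i\nmid g$) is consumed; the paper's version trades that transparency for brevity.
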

\begin{proof} Statement \eqref{statement1} was previously shown in \cite[Proposition 4.19]{IMS}. 

\eqref{statement2} The formula for the Hilbert series follows from \eqref{HilbertCS}.
\end{proof}

Our first main result is based on the principle that a connected sum algebra, where the summands have the WLP, also has the WLP, provided that the socle degree of the algebra over which the connected sum is taken is not too large. Indeed, we have:

\begin{theorem} \label{main1}
Assume $K$ is a field of characteristic zero. Let $g,m_1, m_2$ be monomials in $R'$.    Let $A_F$ be an Artinian Gorenstein algebra of socle degree $d$, with Macaulay dual generator $$F=g(m_1-m_2), \qwhere \gcd(m_1,m_2)=1 \qand \deg (g)<\left \lfloor\frac{\deg (F)-1}{2} \right \rfloor.$$ Then, $A_F$ has the WLP, and hence the Hilbert function of $A_F$ is unimodal.
\end{theorem}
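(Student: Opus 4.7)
The plan is to leverage the connected sum decomposition $A_F \cong A_{gm_1} \#_{A_g} A_{gm_2}$ from \cref{key}, together with the Stanley--Watanabe theorem that monomial complete intersections in characteristic zero satisfy the SLP. The idea is that the two factors $A_{gm_1}$ and $A_{gm_2}$ are monomial complete intersections (hence SLP), and the degree hypothesis on $g$ will make the fiber structure over $A_g$ ``invisible'' in the middle degrees where WLP needs to be tested.

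Setting $k = \deg g$, by \cref{mid map} the WLP for $A_F$ reduces to showing that, for a general linear form $\ell$, the multiplication map $\times \ell : (A_F)_i \to (A_F)_{i+1}$ is injective at $i = \lfloor (d-1)/2 \rfloor$. The defining short exact sequence \eqref{exactCS} of the connected sum reads
$$0 \longrightarrow A_g(k-d) \longrightarrow A_{gm_1} \times_{A_g} A_{gm_2} \longrightarrow A_F \longrightarrow 0,$$
whose leftmost term contributes $(A_g)_{j-(d-k)}$ in degree $j$, and this vanishes whenever $j < d-k$. A short case analysis on the parity of $d$ shows that the hypothesis $k < \lfloor(d-1)/2\rfloor$ already forces $i+1 < d-k$; consequently, in both critical degrees the map $A_{gm_1} \times_{A_g} A_{gm_2} \to A_F$ is an isomorphism, so $(A_F)_i$ and $(A_F)_{i+1}$ can be identified with the corresponding graded pieces of the fiber product.

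The fiber product $A_{gm_1} \times_{A_g} A_{gm_2}$ sits as a graded $K$-subalgebra of $A_{gm_1} \oplus A_{gm_2}$, with $\ell$ acting diagonally. Since $A_{gm_1}$ and $A_{gm_2}$ are monomial complete intersections of socle degree $d$ and $\char K = 0$, Stanley--Watanabe guarantees that a general $\ell$ is a strong Lefschetz element for both; combining the symmetry and the unimodality of the Hilbert function of a complete intersection with $i \le \lfloor d/2 \rfloor$, the inequality $h_i \le h_{i+1}$ holds in each factor, so maximum rank of $\times \ell$ amounts to injectivity on each $(A_{gm_j})_i$. Injectivity on the direct sum then restricts to injectivity on the subspace $(A_F)_i$, establishing WLP; unimodality of the Hilbert function follows as the standard consequence.

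The delicate part is the numerical bookkeeping: the bound $\deg g < \lfloor(d-1)/2\rfloor$ is precisely calibrated so that the correction term $A_g(k-d)$ is invisible in the two middle degrees used in \cref{mid map}. If $\deg g$ were allowed to be larger, $A_g$ could perturb the Hilbert function around the middle in a way that breaks maximum rank, which is exactly the mechanism behind the failure of WLP exhibited in \cref{e:failure}. Thus the main obstacle is not analytic difficulty but confirming that the hypothesis is sharp enough to make the connected sum correction vanish in the critical range, while still allowing $g$ to be a nontrivial common factor.
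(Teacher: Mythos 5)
Your proposal is correct and follows essentially the same strategy as the paper: decompose $A_F$ as a connected sum $A_{gm_1}\#_{A_g}A_{gm_2}$ via \cref{key}, use the defining exact sequence \eqref{exactCS} together with the degree hypothesis to identify the graded pieces of $A_F$ in the two middle degrees of \cref{mid map}, and then invoke Stanley--Watanabe to get injectivity of $\times\ell$ on the monomial complete intersection summands. You make a small (valid) simplification: the paper uses \emph{both} exact sequences attached to the fiber product to show $[A_F]_t\cong[A_{gm_1}]_t\oplus[A_{gm_2}]_t$ outright in the critical degrees, whereas you only use the first exact sequence to conclude $[A_F]_t$ equals the fiber product in those degrees, and then exploit the always-available inclusion of the fiber product into the direct sum --- injectivity on the ambient direct sum restricts to injectivity on the subspace. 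One small omission worth flagging: \cref{key} requires $m_1\nmid g$ and $m_2\nmid g$. You should note that this follows from the degree hypothesis, since $\deg m_i=d-\deg g$ and $\deg g<\lfloor(d-1)/2\rfloor<d/2$ forces $\deg m_i>\deg g$.
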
 
\begin{proof}
 The hypothesis $\deg (g)<\left \lfloor\frac{\deg (F)-1}{2} \right\rfloor$ implies that $m_1\nmid g$ and $m_2\nmid g$. Therefore, we can apply \cref{key} to obtain $$A_F=A_{gm_1} \#  _{A_g} A_{gm_2}.$$
Using  the degree-preserving  exact sequences in \eqref{exactCS}:
$$ 0 \to  A_{g}(\deg(g) - d) \to  A_{gm_1}\times _{ A_{g} } A_{gm_2}\to  A_{gm_1} \#  _{A_g} A_{gm_2}=A_F\to  0
$$
and
$$
  0\to A_{gm_1} \times  _{A_g} A_{gm_2} \to A_{gm_1}\oplus A_{gm_2} \to {A_g} \to 0\ ,
$$
we get a direct sum decomposition \begin{equation}\label{eq: direct sum}
[A_F]_t\cong [A_{gm_1}]_t\oplus [A_{gm_2}]_t \qfor  t=\left \lfloor\frac{d-1}{2}\right\rfloor, \left \lfloor\frac{d-1}{2}\right\rfloor+1.
\end{equation}
This conclusion follows from the above exact sequences and the condition that $[A_g]_t=0$ and $[A_g(\deg(g)-d)]_t=0$ for the values of $t$ given above. The latter condition holds because the socle degree of $A_g$ is given by
\[
\reg(A_g)=\deg(g)<\left\lfloor\frac{d-1}{2}\right\rfloor\leq t,
\]
while the initial degree of $A_g(\deg(g)-d)$ is 
\[
d-\deg(g)>d-\left \lfloor\frac{d-1}{2}\right \rfloor=\left \lfloor\frac{d-1}{2}\right \rfloor+1\geq t.
\]
Since $A_{gm_1}$ and $A_{gm_2}$ are monomial complete intersection of socle degree $d$, they have the SLP in characteristic zero, Consequently we know that for a general linear form $\ell$ the multiplication map:
$$\times \ell \colon [A_{gm_1}\oplus A_{gm_2}]_{\lfloor\frac{d-1}{2}\rfloor}\longrightarrow [A_{gm_1}\oplus A_{gm_2}]_{\lfloor\frac{d-1}{2}\rfloor +1}$$
is injective (as the direct sum of two injective maps) and we conclude from \eqref{eq: direct sum} that  for a general linear form $\ell$
  $$\times \ell :[A_F]_{\lfloor\frac{d-1}{2}\rfloor} \to [A_F ]_{\lfloor\frac{d-1}{2}\rfloor+1} $$
  is injective. Therefore, by \cref{mid map}, $A_F$ has the WLP.
\end{proof}

\begin{remark} 
The reader may consult \cite{ADFMMSV1} for new examples of families of Artinian Gorenstein algebras $A_F$ of codimension $n\ge 4$ with a binomial Macaulay dual generator  as in \cref{main1}, satisfying the WLP and having 
    $\deg (g)\ge \left \lfloor\frac{\deg (F)-1}{2} \right \rfloor$. Thus \cref{main1} provides a sufficient, but not always necessary condition for the WLP. 
\end{remark}

A useful criterion to establish the WLP for AG algebras is expressed in terms of higher Hessian matrices defined as follows.
\begin{definition}
Let $F\in R' = K[X_1, \dots  , X_n]$ be a homogeneous polynomial and let $ A = R/\Ann_R(F)$ be the
associated AG algebra. Fix  an ordered
$K$-basis $$\mathcal{B} = \{w_j \mid 1\le j \le h_t:=\dim_K A_t \} \subset A_t.$$
The $t$-th (relative) {\em Hessian matrix} of $F$ with respect to $\mathcal{B}$ is defined as the $h_t \times h_t$ matrix:
$$
\Hess_F^t=(w_iw_j(F))_{i,j}.$$
\end{definition}
In the following example, we use the Hessian criterion proved in {\cite[Theorem 4]{w1} and \cite[Theorem 3.1]{MW}} to show that, in codimension $c\ge 4$, it is not true that all connected sum algebras have WLP. 
This example also shows that \cref{main1} is  the best possible in the sense that the hypothesis $\deg (g)<\left \lfloor\frac{\deg (F)-1}{2} \right \rfloor$ cannot be removed.  The first example of a codimension four artinian Gorenstein algebra that fails to satisfy the WLP was given by Ikeda in \cite{Ikeda}; see also \cite[Example 3.79]{LefscetzBook}. Ikeda's example has Hilbert function $(1,4,10,10,4,1)$, but \cref{e:failure} has a smaller Hilbert function, which is least possible for such an example in view of \cite{BMMN}.

\begin{example}[{\bf The failure of WLP in codimension $\geq 4$}]\label{e:failure}
    We take $$F=X^3YZ-XY^3T=XY(X^2Z-Y^2T)\in K[X,Y,Z,T],$$ so we have 
    $$\deg(XY)=2=\lfloor 4/2 \rfloor =\left \lfloor \frac{\deg(F)-1}{2} \right \rfloor.$$
      The Hilbert function of $A_F$ is $(1, 4, 7, 7, 4, 1)$,
    $$\Ann(F)=\langle z^2,t^2,tz,x^2t,y^2z,x^2z+y^2t,y^4,x^2y^2,x^4\rangle,$$
 and we will show that for any linear form $\ell \in [A_F]_1$, the multiplication map $$\ell \colon[A_F]_2\longrightarrow [A_F]_3$$ has rank $<7$. The Hessian matrix of $F$ of order two is of the following form
$$
\Hess^2_F=6\begin{pmatrix}
0&Y&X&Z&0&0&0\\
Y&0&0&X&0&0&0\\
X&0&0&0&0&0&0\\
Z&X&0&0&0&-Y&-T\\
0&0&0&0&0&0&-Y\\
0&0&0&-Y&0&0&-X\\
0&0&0&-T&-Y&-X&0\\
\end{pmatrix},
$$
and it has a vanishing determinant. 
According to the Hessian criterion \cite[Theorem 3.1]{MW}, $A_F$ does not have WLP. 
It is worthwhile to point out that $A_F$ is a doubling (\cref{doubling_const}) of the 0-dimensional subscheme $P_F\subset \PP^3$, with homogeneous ideal $J=\langle z^2,t^2,tz,x^2t,y^2z,x^2y^2\rangle$.

 More generally when $$F=X_1X_2X_5\cdots X_n(X_1^{n-2}X_3-X_2^{n-2}X_4),$$ computations on Macaulay2~\cite{M2} indicate that the codimension $n$ artinian algebra $A_F$ fails the WLP for all values  $n\in \{4,\ldots, 10\}$, which is as far as  we have been able to compute. Based on this evidence and a careful analysis of the behaviour of the binomial $F$, we speculate that $A_F$ fails the Weak Lefschetz property for all values $n \geq 4$.
\end{example}

\section{Structure of codimension 3 algebras with binomial Macaulay dual generator}

From now on, we restrict our attention to the codimension $c=3$ case. This case is particularly appealing because of the Buchsbaum-Eisenbud structure theorem \cref{lem:BE} describing the Artinian Gorenstein algebras of dimension three. Despite this powerful theorem, from the Macaulay duality perspective the codimension three case is poorly understood and given a homogeneous polynomial $F\in R'$, an explicit formula for the minimal generators of $\Ann(F)$ is not known, even in the case of $c=3$. We give a complete solution to this problem for $F$ a binomial in \cref{prop: gens family4}.

Up to a permutation of the variables, if $\kk$ is algebraically closed, by \cref{lem: no constants} one can write any binomial Macaulay dual generator in three variables as follows: 
\begin{equation}\label{eq: 3 var binomial}
F=X^aY^bZ^c(Z^n-X^eY^m) \text{ with } n=e+m>0.
\end{equation}
We begin by determining the minimal generators and the minimal free resolutions of the annihilator ideals for the binomials given in \eqref{eq: 3 var binomial}.

\begin{theorem}\label{prop: gens family4}
Consider  the binomial $F=X^aY^bZ^c(Z^n-X^eY^m)$ of degree $d$ with $n=e+m>0$. Set $c+1=nq+r$ with $0\leq r<n$ and set
\[
P=\sum_{i=0}^q x^{ei}y^{mi}z^{c+1-ni}.
\]
Then 
\begin{enumerate}
 \item[(0)]\label{Prop4.2-case0} if $e=0$ or $m=0$ then  
 $A_F$ is a complete intersection.
 
 \end{enumerate}
 For all the following cases we assume $em\neq 0$. Then \[\Ann(F)= \langle  x^{a+e+1}, y^{b+m+1}\rangle + J,\] where
 \begin{enumerate}
 \item\label{Prop4.2-case1} if  $a<qe$ or $b<qm$ then $J=\langle P\rangle$ and $A_F$ is a complete intersection;

\item\label{Prop4.2-case2} if $a\geq (q+1)e$ and $b\geq (q+1)m$ and $r>0$ then $J$ is minimally generated by
\begin{multline*}
z^{n+c+1},\, x^{a-(q+1)e+1}\left[z^{n-r}P+x^{(q+1)e}y^{(q+1)m}\right],\\
 y^{b-(q+1)m+1}\left [z^{n-r}P+x^{(q+1)e}y^{(q+1)m}\right],\, x^{a-qe+1}P,\, y^{b-qm+1}P;
\end{multline*}
\item \label{Prop4.2-case4} if $a\geq (q+1)e$ and $b\geq (q+1)m$ and $r=0$ then $J=\big \langle x^{a-qe+1}P, y^{b-qm+1}P ,z^{n+c+1}\big \rangle$;

\item\label{Prop4.2-case3} otherwise 
$J= \big \langle x^{a-qe+1}P, y^{b-qm+1}P , z^{n-r}P+x^{(q+1)e}y^{(q+1)m}\big \rangle$.
\end{enumerate}
Moreover, in each case, the ideal $\Ann(F)$ is minimally generated by the given generators
and the minimal free resolutions of $R/\Ann(F)$ appear in \cref{f:res}. 

\begin{table}[ht!]
\begin{tabular}{|p{0.4cm}|p{13.45cm}|}
\hline
\eqref{Prop4.2-case1} &
\hspace{1cm}$0 \rightarrow R(-d-3) \rightarrow 
 \begin{array}{c} R(-a-b-e-m-2)\\\oplus \\ R(-a-c-e-2) \\\oplus  \\ R(-b-c-m-2) \end{array}
    \rightarrow \begin{array}{c}  R(-a-e-1) \\ \oplus \\ R(-b-m-1) \\ \oplus \\ R(-c-1) \end{array} \rightarrow R$
  \\
  \hline
 \eqref{Prop4.2-case2} & 
 \parbox{0.7\linewidth}{ \begin{multline}\label{eq:free-res-case-2}
 \textstyle{\hspace{-0.3cm}0 \rightarrow R(-d-3) \rightarrow 
 \begin{array}{c} 
 R(-b-c-m-2)\\\oplus \\ R(-a-c-e-2) \\\oplus \\ R(-a-b-2)\\ \oplus \\ R(-a-(q+1)m-r-1)\\\oplus \\ R(-b-(q+1)e-r-1) \\ \oplus \\ R(-b-(q+1)e-m-1)  \\ \oplus \\ R(-a-(q+1)m-e-1) 
 \end{array}
\rightarrow
\begin{array}{c} 
R(-a-e-1)\\\oplus \\ R(-b-m-1)\\\oplus \\ R(-n-c-1)\\\oplus \\R(-b-(q+1)e-1) \\ \oplus \\  R(-a-(q+1)m-1) \\ \oplus \\ R(-a-qm-r-1) \\ \oplus \\ R(-b-qe-r-1) \end{array} \rightarrow R}
  \end{multline}} 
  \\
  \hline
\eqref{Prop4.2-case4} & 
\parbox{0.7\linewidth}{ 
\begin{multline}\label{eq: free res case 3}
\textstyle{\quad 0 \rightarrow R(-d-3) \rightarrow \begin{array}{c} R(-b-c-m-2)\\\oplus \\ R(-a-c-e-2) \\ \oplus\\ R(-b-(q+1)e-m-1)  \\ \oplus \\ R(-a-(q+1)m-e-1)\\ \oplus \\ R(-a-b-2) \end{array}
    \rightarrow \begin{array}{c} R(-a-e-1)\\\oplus \\ R(-b-m-1)\\ \oplus \\ R(-a-qm-1) \\ \oplus \\ R(-b-qe-1) \\ \oplus \\ R(-c-n-1) \end{array} \rightarrow R \hspace{0.2cm}}
\end{multline}}
  \\
  \hline
\eqref{Prop4.2-case3} &
 \parbox{0.7\linewidth}{ \begin{multline}\label{eq: free res case 4}
 \textstyle{0 \rightarrow R(-d-3) \rightarrow \begin{array}{c} R(-b-c-m-2)\\\oplus \\ R(-a-c-e-2) \\ \oplus\\ R(-b-(q+1)e-m-1)  \\ \oplus \\ R(-a-(q+1)m-e-1)\\ \oplus \\ R(-a-b-r-2) \end{array}
    \rightarrow \begin{array}{c} R(-a-e-1)\\\oplus \\ R(-b-m-1)\\ \oplus \\ R(-a-qm-r-1) \\ \oplus \\ R(-b-qe-r-1) \\ \oplus \\ R(-c-n+r-1) \end{array} \rightarrow R  }
    \end{multline}}\\
  \hline
 \end{tabular}
\caption{Resolutions for \cref{prop: gens family4}.}\label{f:res}
\end{table}

\end{theorem}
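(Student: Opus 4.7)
The plan is three-fold: verify each listed element lies in $\Ann(F)$ by direct contraction, argue completeness of the generating set, and then extract the minimal free resolution.

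Expanding $F = X^aY^bZ^{c+n} - X^{a+e}Y^{b+m}Z^c$, the contraction action of any monomial $x^iy^jz^k$ splits as a difference of two (possibly zero) monomials in $R'$. Immediately $x^{a+e+1}, y^{b+m+1}, z^{c+n+1} \in \Ann(F)$, since the two monomials of $F$ have $X$-exponents at most $a+e$, $Y$-exponents at most $b+m$, and $Z$-exponents at most $c+n$. Case~\eqref{Prop4.2-case0} is then handled by observing that $e=0$ or $m=0$ reduces $F$ to a monomial up to a rescaling of a single variable, forcing $A_F$ to be a monomial complete intersection.

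The core computation is the telescoping
\[
P \circ F \;=\; \sum_{i=0}^q X^{a-ei}Y^{b-mi}Z^{n(i+1)-1} \;-\; \sum_{i=1}^q X^{a-e(i-1)}Y^{b-m(i-1)}Z^{ni-1},
\]
in which each monomial in the second sum equals the preceding term of the first sum (with the convention that a monomial having a negative exponent is zero). When $a \ge qe$ and $b \ge qm$ only $X^{a-qe}Y^{b-qm}Z^{n(q+1)-1}$ survives; when either bound fails, total cancellation forces $P \circ F = 0$. This already settles Case~\eqref{Prop4.2-case1}: $x^{a+e+1}, y^{b+m+1}, P$ form a regular sequence with degrees $a+e+1, b+m+1, c+1$ summing to $d+3$, so $A_F$ is the corresponding complete intersection and the resolution is the Koszul complex displayed in the first row of \cref{f:res}.

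For the remaining cases, introduce the correction $P' := z^{n-r}P + x^{(q+1)e}y^{(q+1)m}$. A parallel telescoping, together with the identity $z^{n-r}\circ (X^{a-qe}Y^{b-qm}Z^{n(q+1)-1}) = X^{a-qe}Y^{b-qm}Z^c$ (using $c = nq + r - 1$), shows that the contribution $-X^{a-qe}Y^{b-qm}Z^c$ coming from $x^{(q+1)e}y^{(q+1)m}\circ(X^{a+e}Y^{b+m}Z^c)$ cancels against $z^{n-r}(P\circ F)$, yielding
\[
P' \circ F \;=\; \begin{cases} X^{a-(q+1)e}Y^{b-(q+1)m}Z^{c+n} & \text{if } a \ge (q+1)e \text{ and } b \ge (q+1)m,\\ 0 & \text{otherwise.}\end{cases}
\]
From these two telescoping identities, the listed generators are easily verified: multiplying $P$ by $x^{a-qe+1}$ or $y^{b-qm+1}$ kills the remainder from $P \circ F$, and multiplying $P'$ by $x^{a-(q+1)e+1}$ or $y^{b-(q+1)m+1}$ kills the remainder from $P' \circ F$. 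The split into Cases~\eqref{Prop4.2-case2}, \eqref{Prop4.2-case4}, and \eqref{Prop4.2-case3} then follows naturally from whether $r > 0$ with both bounds on $a, b$ satisfied, $r = 0$ with both bounds, or only one of the bounds satisfied; in Case~\eqref{Prop4.2-case3} the polynomial $P'$ itself lies in $\Ann(F)$ and makes $z^{n+c+1}$ redundant.

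To prove completeness and minimality and to obtain the resolutions, I would invoke the Buchsbaum--Eisenbud structure theorem (\cref{lem:BE}): since $\Ann(F)$ has height 3 and $A_F$ is Gorenstein, $\Ann(F)$ is minimally generated by the submaximal pfaffians of a skew-symmetric matrix of odd size. The strategy is to exhibit such a matrix in each case, of size $3$ in Case~\eqref{Prop4.2-case1}, size $5$ in Cases~\eqref{Prop4.2-case4} and \eqref{Prop4.2-case3}, and size $7$ in Case~\eqref{Prop4.2-case2}, whose pfaffians match the proposed generators. The associated Buchsbaum--Eisenbud complex then produces the resolution listed in \cref{f:res}, with the graded shifts dictated by the degrees of the pfaffians. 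As an independent cross-check, the Hilbert series read off from each resolution in \cref{f:res} can be matched against the Hilbert series of $A_F$ computed via \cref{key}\eqref{statement2}.

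The hard part will be exhibiting the correct skew-symmetric matrix in the five- and seven-generator cases and tracking the graded shifts across the four cases, in particular through the boundary behavior where $r = 0$ versus $r > 0$ or where $a, b$ lie just above or below $qe, qm, (q+1)e, (q+1)m$. Once those matrices are in hand, minimality of the generators follows from the degrees of the pfaffians being strictly smaller than the regularity of $A_F$, and the table of resolutions in \cref{f:res} is a direct read-off.
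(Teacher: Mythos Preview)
Your overall strategy matches the paper's: verify containment $I \subseteq \Ann(F)$ by direct contraction, exhibit a skew-symmetric matrix whose submaximal pfaffians generate $I$ (so $R/I$ is Gorenstein of codimension three by \cref{lem:BE}), and then conclude $I=\Ann(F)$. The telescoping computations for $P\circ F$ and $P'\circ F$ are correct and agree with the paper. However, two steps in your outline would fail as written.

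First, your treatment of case~(0) is wrong: when $e=0$ one has $F=X^aY^bZ^c(Z^n-Y^n)$, and no rescaling of a single variable turns this into a monomial. The paper instead writes $F=X^a\cdot G$ with $G\in K[Y,Z]$, so that $A_F\cong A_{X^a}\otimes_K A_G$; since every AG algebra of codimension at most two is a complete intersection, so is the tensor product. Second, and more seriously, your argument for the equality $I=\Ann(F)$ does not close. You propose cross-checking Hilbert series via \cref{key}, but that proposition requires $m_1\nmid g$ and $m_2\nmid g$; with $g=X^aY^bZ^c$, $m_1=Z^n$, $m_2=X^eY^m$ these conditions fail precisely in the cases of interest (e.g.\ whenever $c\geq n$, i.e.\ $q\geq 1$). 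The paper's route is the one you need: once the Buchsbaum--Eisenbud matrix is in hand, read off from the resulting resolution that $\reg(R/I)=d$; the graded surjection $R/I\twoheadrightarrow A_F$ is then a map between AG algebras of the \emph{same} socle degree, hence it restricts to a bijection of one-dimensional socles and is therefore an isomorphism. This socle-degree comparison is the missing idea bridging ``$I$ is Gorenstein and contained in $\Ann(F)$'' to ``$I=\Ann(F)$''. Finally, note that the paper does explicitly display the $5\times 5$ and $7\times 7$ skew matrices (your acknowledged ``hard part''), and that minimality of the generators comes not from a degree bound but from the matrix entries lying in the maximal ideal, which is exactly the hypothesis of \cref{lem:BE}.
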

\begin{proof}
We start with case (0) where $F=X^aY^bZ^c(Z^n-Y^n)$ or $F=X^aY^bZ^c(Z^n-X^n)$. By symmetry, it suffices to handle the polynomial $F=X^aY^bZ^c(Z^n-Y^n)$. Set $G_1=X^a$ and $G_2=Y^bZ^c(Z^n-Y^n)$. Then from $F=G_1G_2$ and the fact that $G_1$ and $G_2$ do not share variables, it follows that $A_F=A_{G_1}\otimes_K A_{G_2}$. Since all AG algebras of codimension at most two are complete intersections, both $A_{G_1}$ and $A_{G_2}$ are complete intersections. Thus, $A_F=A_{G_1}\otimes_K A_{G_2}$ is a complete intersection as well with 
\[
\Ann(F)=\Ann(G_1)+\Ann(G_2)=\langle x^{a+1} \rangle +\Ann(G_2).
\]
The common structure of the proof in the remaining cases is as follows. 
In each case, we identify an ideal $I$ such that $I\subseteq \Ann(F)$. 
We then use the Buchsbaum-Eisenbud criterion \cite{BE} to show that $R/I$ is a Gorenstein algebra of codimension three by constructing a skew-symmetric matrix whose pfaffians generate $I$. This matrix also serves to establish in each case that $\reg(R/I)=d$. Finally, the containment $I\subset \Ann(F)$ determines a degree-preserving surjection $R/I\twoheadrightarrow A_F$. Since this map induces a bijection between the socles of $A_F$ and $R/I$, it must be an isomorphism cf.~\cite[Lemma 3.2]{ADFMMSV1}, hence $I=\Ann(F)$.

Let $I= \langle x^{a+e+1}, y^{b+m+1}\rangle + J$.
We note that for $A,B,C\geq 0$\begin{equation}\label{e:pof0}
 x^Ay^Bz^C \circ F = 
 X^{a-A}Y^{b-B}Z^{c+n-C} -
 X^{a+e-A}Y^{b+m-B}Z^{c-C},
\end{equation}
where we adopt the convention that any monomial containing a negative exponent is equal to zero.
It is clear from \eqref{e:pof0} that $x^{a+e+1}, y^{b+m+1} \in \Ann(F)$. 

Now consider $P \circ F $, we have  that 
\begin{align}\label{e:pof}
 P \circ F &= 
 \left [\sum_{i=0}^q x^{ie}y^{im}z^{c+1-ni}\right ]  \circ F \\ \nonumber 
  &=\sum_{i=0}^{q} X^{a-ie}Y^{b-im}Z^{n(i+1)-1}  
   -\sum_{i=0}^{q} X^{a-(i-1)e}Y^{b-(i-1)m}Z^{ni-1} \\ \nonumber
  &=\sum_{i=0}^{q} X^{a-ie}Y^{b-im}Z^{n(i+1)-1}  
   -\sum_{i=-1}^{q-1} X^{a-ie}Y^{b-im}Z^{n(i+1)-1} \\ \nonumber
  &= X^{a-qe}Y^{b-qm}Z^{n(q+1)-1} 
   - X^{a+e}Y^{b+m}Z^{-1}\\ \nonumber
    &= X^{a-qe}Y^{b-qm}Z^{c+n-r}.
\end{align}
It follows more generally from \eqref{e:pof} that for $A,B,C\geq 0$
\begin{equation}\label{e:pof2}
 x^Ay^Bz^C P \circ F = X^{a-qe-A}Y^{b-qm-B}Z^{c+n-r-C}.
 \end{equation}
The proof now proceeds by considering the four cases outlined in the statement.

\subsubsection*{\eqref{Prop4.2-case1}} In this case,  $a<qe$ or $b<qm$. 
Using \eqref{e:pof} and recalling that a monomial containing a negative exponent is equal to zero, we see that 
$$P\circ F= X^{a-qe}Y^{b-qm}Z^{(q+1)n-1}
    =0.$$ 
This implies that $P\in \Ann(F)$, and hence $I\subseteq \Ann(F)$.
Observe that $I$ is a complete intersection since, for example, the leading terms of the generators under any term order for which $z>y>x$ form a regular sequence. Moreover, the socle degree of $R/I$ is
\begin{align*}
\reg(R/I) &= a+e+b+m+\ell n+u-1 \\
&= a+b+(\ell+1)n+c-\ell n \\
&=  a+b+c+n=\deg(F)=\reg(A_F).
\end{align*}
Thus  $\Ann(F)=I$ follows by \cite[Lemma 3.2]{ADFMMSV1} as explained above.

\subsubsection*{\eqref{Prop4.2-case2}} In this case 
$a-(q+1)e+1\geq 0$ and $b-(q+1)m+1 \geq 0$.
We check that $J\subset \Ann(F)$.  Clearly, by \eqref{e:pof0}, $z^{n+c+1} \in \Ann(F)$. Next, using \eqref{e:pof2} and \eqref{e:pof0}, we see that the second generator of $J$ is in $\Ann(F)$
because 
\begin{align*}
x^{a-(q+1)e+1}z^{n-r}P\circ F &=  
X^{a-qe-(a-(q+1)e+1)}Y^{b-qm}Z^{c+n-r-(n-r)} =
X^{e-1}Y^{b-mq}Z^{c}\\
&=-x^{a+1}y^{(q+1)m}\circ F,
\end{align*}
and similarly, the third generator of $J$ is in $\Ann(F)$
because
$$y^{b-(q+1)m+1}z^{n-r}P\circ F =  
X^{a-qe}Y^{m-1}Z^{c}=
-x^{(q+1)e}y^{b+1}\circ F.
$$
 Finally, working under the convention that terms with negative exponents are understood to be zero, again from \eqref{e:pof2} we have
\begin{flalign*}
x^{a-qe+1}P\circ F&=
X^{-1}Y^{b-qm}Z^{c+n-r} =0,\\
y^{b-qm+1}P\circ F&=
X^{a-qe}Y^{-1}Z^{c+n-r} =0,
\end{flalign*}
which shows that the last two generators of $J$ belong to $\Ann(F)$.  
This establishes $I\subseteq \Ann(F)$.

It can be verified by direct computation that $I$ is the ideal of Pfaffians of the skew-symmetric matrix

\begin{equation}\label{eq: matrix 2}
\begin{bmatrix}
    0 & 0 & 0 & 0 & -z^r & -x^e & -y^{b-(q+1)m+1}\\
    0 & 0 & 0 & -P & 0 & y^{(q+1)m} & 0\\
    0 & 0 & 0 & x^{(q+1)e} & y^m & z^{n-r} & 0\\
    0 & P & -x^{(q+1)e} & 0 & 0 & 0 & 0\\
    z^r & 0 & -y^m & 0 & 0 & 0 & x^{a-(q+1)e+1}\\
    x^e & -y^{(q+1)m} & -z^{n-r} & 0 & 0 & 0 & 0 \\
    y^{b-(q+1)m+1} & 0 & 0 & 0 & -x^{a-(q+1)e+1} & 0 & 0
\end{bmatrix}. 
\end{equation}
Hence, by \cref{lem:BE} (see also~\cite[Theorem~3.4.1]{BH93}), the quotient $R/I$ is a Gorenstein ring with free resolution given by \eqref{eq:free-res-case-2}. Note that the application of \cref{lem:BE} fails if $e=0$ or $m=0$ or $r=0$ since in that case the entry $z^r$ does not belong to the homogeneous maximal ideal.

\medskip
\medskip

It can be read from the minimal free resolution \eqref{eq:free-res-case-2}  (see~\cite{Geramita}) that both $R/I$ and $A_F$ have socle degree $d$. 
By the considerations presented at the beginning of the proof, it follows that $\Ann(F)=I$.

\subsubsection*{\eqref{Prop4.2-case4}} This case differs from case \eqref{Prop4.2-case2} only by the assumption that $r=0$. We have already checked in case \eqref{Prop4.2-case2} that the generators of $J$ belong to $\Ann(F)$ independent of the value of $R$. Thus we have $I\subseteq\Ann(F)$.

Set $Q=z^n-x^ey^m$. 
It can be verified by direct computation that $I$ is the ideal of Pfaffians of the skew-symmetric matrix 

\begin{equation}\label{eq: matrix 3}
\begin{bmatrix}
    0 & 0 & 0 & 0 &  y^{(q+1)m} & -P\\
    0 & 0 & y^{b-qm+1} & -x^{a-qe+1} & 0\\
    0 & -y^{b-qm+1} &0 & Q & x^{(q+1)e}\\
    - y^{(q+1)m} & x^{a-qe+1} & Q & 0 & 0\\
    P & 0 & -x^{(q+1)e} & 0 & 0
\end{bmatrix}.
\end{equation}
Hence, by \cref{lem:BE}, $R/I$ is a Gorenstein ring with free resolution given by \eqref{eq: free res case 3}.
It can be read from the minimal free resolution \eqref{eq: free res case 3}  that both $R/I$ and $A_F$ have socle degree $d$. 
By the considerations presented at the beginning of the proof, it follows that $\Ann(F)=I$.

\subsubsection*{\eqref{Prop4.2-case3}} Let 
$$    J=\left \langle  x^{a-qe+1}P, y^{b-qm+1}P , z^{n-r}P+x^{(q+1)e}y^{(q+1)m}\right \rangle.
$$
Our earlier computations show that the first two generators of $J$ belong to $\Ann(F)$. 
The last generator also belongs to $\Ann(F)$, as shown by the following identity implied by \eqref{e:pof2} and \eqref{e:pof0},
$$
z^{n-r}P \circ F = 
X^{a-qe}Y^{b-qm}Z^{c} =
-x^{(q+1)e}y^{(q+1)m} \circ F.
$$
Observe that $I$ is the ideal generated by the Pfaffians of
\begin{equation}\label{eq: matrix 4}
\begin{bmatrix}
     0 & 0 & -x^{(q+1)e} & 0 & P\\
     0 & 0 & -z^{n-r} & x^{a-qe+1} &-y^{(q+1)m}  \\
     x^{(q+1)e}& z^{n-r} & 0 & -y^{b-qm+1} &0 \\
     0 &-x^{a-qe+1} & y^{b-qm+1} & 0 & 0\\
     -P & y^{(q+1)m} & 0 & 0 &0 \\
\end{bmatrix}.
\end{equation}
By \cref{lem:BE}, this yields that $R/I$ is Gorenstein and has the free resolution \eqref{eq: free res case 4}. 
This also verifies that $\reg(R/I)=d=\reg(A_F)$, and by the considerations at the beginning of the proof, it follows that $\Ann(F)=I$.
\end{proof}

Our theorem allows us to make a contribution to the following open problem (see \cite{HWW} and its references list):

\begin{problem}
    Characterize the homogeneous forms $F\in K[X,Y,Z]$ whose annihilator $\Ann(F)$ is a complete intersection ideal.
\end{problem}

 Very few answers for this problem are known. For
example, if $F$ is a monomial then $\Ann(F)$ is a monomial complete intersection and in \cite{HWW}
the case of quadratic complete intersections is considered.
As a direct application of \cref{prop: gens family4} we have

\begin{corollary}\label{c:ci}
Consider the binomial 
$$
F=X^aY^bZ^c(Z^n-X^eY^m) \qwith n=e+m>0.
$$
 Set $q=\left \lfloor \frac{c+1}{n}\right\rfloor$. Then $A_F$ is a complete intersection if and only if $e=0$ or $m=0$ or $a < qe$ or $b < qm$.
\end{corollary}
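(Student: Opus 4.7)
The plan is to derive this as a direct consequence of the structural result in \cref{prop: gens family4}, which enumerates the minimal generating set of $\Ann(F)$ under a complete case split on the parameters $a,b,c,e,m,n$. The key observation is that, since $A_F = R/\Ann(F)$ has codimension $3$, the algebra $A_F$ is a complete intersection if and only if $\Ann(F)$ is minimally generated by exactly $3$ elements.

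First I would handle the ``if'' direction by invoking cases \eqref{Prop4.2-case1} and case (0) of \cref{prop: gens family4}. When $e=0$ or $m=0$, case (0) already asserts that $A_F$ is a complete intersection (realized as a tensor product $A_{G_1}\otimes_K A_{G_2}$ of two complete intersections in disjoint variables). When $em \neq 0$ together with $a < qe$ or $b < qm$, case \eqref{Prop4.2-case1} gives $\Ann(F) = \langle x^{a+e+1},\, y^{b+m+1},\, P\rangle$, which is manifestly a codimension $3$ complete intersection (its initial ideal with respect to any term order with $z>y>x$ is generated by pure powers of $x,y,z$).

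For the ``only if'' direction, I would observe that the hypothesis $em \neq 0$ together with the negation of ``$a<qe$ or $b<qm$'' places us in exactly one of cases \eqref{Prop4.2-case2}, \eqref{Prop4.2-case4}, or \eqref{Prop4.2-case3} of \cref{prop: gens family4}. A direct count of the listed minimal generators (which the theorem asserts are indeed a minimal generating set) gives $\mu(\Ann(F)) = 2+5 = 7$ in case \eqref{Prop4.2-case2}, and $\mu(\Ann(F)) = 2+3 = 5$ in cases \eqref{Prop4.2-case4} and \eqref{Prop4.2-case3}. In each situation $\mu(\Ann(F)) > 3$, so $A_F$ cannot be a complete intersection. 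Contrapositively, if $A_F$ is a complete intersection then $e=0$, $m=0$, $a<qe$, or $b<qm$, completing the proof.

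There is no substantive obstacle here beyond ensuring the case analysis of \cref{prop: gens family4} is exhaustive and that the minimal generator counts recorded there are correctly tabulated; the corollary is a clean bookkeeping consequence of the structure theorem.
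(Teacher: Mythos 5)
Your proposal is correct and matches the paper's own (terse) argument: the paper simply declares the corollary to be ``a direct application of'' \cref{prop: gens family4}, and your bookkeeping fills in exactly what that means — cases (0) and \eqref{Prop4.2-case1} produce $3$-generated (hence complete intersection) annihilators, while cases \eqref{Prop4.2-case2}, \eqref{Prop4.2-case4}, and \eqref{Prop4.2-case3} produce minimal generating sets of sizes $7$, $5$, $5$ respectively, which rules out a complete intersection in codimension $3$.
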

We will now proceed to show that the minimal sets of generators for $\Ann(F)$ given in \cref{prop: gens family4} have a very special property: under a suitable monomial order they form a Gr\"obner basis for the ideal they generate. 

\begin{proposition}\label{prop: GB}
  Each of the sets of generators listed in \cref{prop: gens family4} is a Gr\"obner basis with respect to any monomial order that satisfies $z>x$ and $z>y$. In particular, the  initial ideals with respect to such an order for the ideals enumerated in \cref{prop: gens family4} (the numbering here matches that in \cref{prop: gens family4})  are  
  \begin{enumerate}[itemsep=0.1cm,topsep=0.1cm]
  \item 
    $\bigl\langle  x^{a+e+1},y^{b+m+1},z^{c+n+1} \big\rangle$;
    \item         
    $\bigl\langle  x^{a+e+1},y^{b+m+1},z^{c+n+1},x^{a-(q+1)e+1}z^{(q+1)n}, y^{b-(q+1)m+1}z^{(q+1)n},x^{a-qe+1}z^{c+1},$ \\$ y^{b-qm+1}z^{c+1}\bigr\rangle $;  
       \item $\bigl\langle  x^{a+e+1},y^{b+m+1},z^{n+c+1},x^{a-qe+1}z^{c+1}, y^{b-qm+1}z^{c+1}\bigr\rangle $;
    \item $\bigl\langle  x^{a+e+1},y^{b+m+1},z^{n(q+1)},x^{a-qe+1}z^{c+1}, y^{b-qm+1}z^{c+1}\bigr\rangle $.
\end{enumerate}
In particular, the last two ideals coincide.
\end{proposition}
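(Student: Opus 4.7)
The plan is to apply Buchberger's criterion: verify that every S-polynomial of pairs of generators reduces to zero modulo the generators, under any monomial order with $z > x$ and $z > y$. As a preliminary step, I would confirm that the stated initial ideal is generated by the leading monomials of the given generators. Since $P = \sum_{i=0}^q x^{ei}y^{mi}z^{c+1-ni}$ is largest in $z$-degree when $i=0$, its leading term is $z^{c+1}$; hence $\mathrm{LT}(x^A P) = x^A z^{c+1}$, $\mathrm{LT}(y^B P) = y^B z^{c+1}$, and $\mathrm{LT}(z^{n-r}P + x^{(q+1)e}y^{(q+1)m}) = z^{n-r+c+1} = z^{n(q+1)}$, where the last equality uses $c+1 = nq+r$. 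These leading terms recover exactly the monomials appearing in each stated initial ideal.

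In case (1), the three leading monomials $x^{a+e+1}, y^{b+m+1}, z^{c+1}$ are pairwise coprime, so Buchberger's second criterion guarantees that all S-polynomials reduce to zero.

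For cases (2), (3), and (4), the computation hinges on the telescoping identity
\[ (z^n - x^e y^m)\, P \;=\; z^{n+c+1} \,-\, x^{(q+1)e}y^{(q+1)m}\, z^{r}, \]
which follows by a direct index shift in the defining sum of $P$. This identity is the algebraic counterpart of the pfaffian relations already used in the proof of \cref{prop: gens family4}, and it encodes precisely how the trailing terms of $P$ cancel when $P$ is multiplied by $z^n$ or by $x^e y^m$. Using it, I would verify the S-polynomial reductions in three groups: (i) S-pairs among generators of the form $x^A P$ and $y^B P$ reduce via $x^{a+e+1}$ and $y^{b+m+1}$; (ii) S-pairs involving the highest power-of-$z$ generator reduce via the telescoping identity, producing either a multiple of $x^{(q+1)e}y^{(q+1)m}z^{r}$ that already lies in the ideal, or zero after one further reduction; (iii) the cross-pairs specific to case (2) involving the mixed generators $x^{a-(q+1)e+1}[z^{n-r}P + x^{(q+1)e}y^{(q+1)m}]$ and its $y$-analogue reduce by combining the identity with one of $x^{a+e+1}, y^{b+m+1}$.

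The main obstacle will be the combinatorial bookkeeping in case (2), which has seven generators and hence up to $\binom{7}{2}=21$ S-pairs to address. I would cut this down by first invoking Buchberger's coprime-leading-term criterion, which eliminates many pairs at once, and by organizing the remaining reductions by how each term interacts with $P$. A fallback, more conceptual route would be to compare Hilbert series: the containment $J \subseteq \mathrm{in}(\mathrm{Ann}(F))$ is automatic, so it suffices to show $H_{R/J}(t) = H_{A_F}(t)$; the right-hand side is computable from the free resolutions in \cref{prop: gens family4}, while the left-hand side can be obtained directly by inclusion-exclusion on the monomial generators of $J$. Finally, the closing statement that the last two ideals coincide reduces to the numerical identity $n(q+1) = n+c+1$ valid when $r=0$, which is the sole generator distinguishing the two formulas.
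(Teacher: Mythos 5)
Your approach matches the paper's: apply Buchberger's criterion and show each S-polynomial reduces to zero modulo the generating set. The telescoping identity $(z^n-x^ey^m)P = z^{c+n+1}-x^{(q+1)e}y^{(q+1)m}z^r$ you highlight is genuinely the algebraic engine behind the nontrivial reductions, and your observation that $\lt(P)=z^{c+1}$ for any order with $z>x,z>y$ (which follows because $z^n=z^ez^m>x^ey^m$) correctly identifies all the leading terms. But the proposal stops at the level of a plan: "I would verify the S-polynomial reductions in three groups." That verification is the entire content of the proof, and it is not carried out. In case (2) there are seven generators and, even after the coprime-leading-term criterion discards many pairs, roughly ten S-pair reductions must be exhibited explicitly (the paper writes them all out, including several delicate ones like $S(z^{n+c+1},Q)$ that require re-expressing tails of $P$ as $y^m[P - x^{qe}y^{qm}z^r]$ plus corrections). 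Until those are produced, the Gr\"obner basis claim is not established.

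Two smaller inaccuracies. First, your group (i) says S-pairs among $x^AP$ and $y^BP$ reduce via $x^{a+e+1}$ and $y^{b+m+1}$; in fact $S(x^AP,y^BP)=y^Bx^AP-x^Ay^BP=0$ outright, while the pure-power generators are instead needed to reduce pairs like $S(x^{a+e+1},x^{a-qe+1}P)$ and the cross-pairs with $Q$ and $Q'$, so the classification should be redrawn. Second, the "preliminary step" of confirming that the stated monomials are the leading terms of the listed generators is the trivial containment $J\subseteq\mathrm{in}(\Ann(F))$; it is worth stating, but the burden of the proposition is the reverse containment. Your Hilbert-series fallback is a legitimate alternative argument and would indeed sidestep the bookkeeping, but it, too, is only gestured at: one would need to actually compute $H_{R/J}(t)$ by inclusion--exclusion on the monomial ideal $J$ and compare it to the Hilbert series read off the Betti numbers in \cref{prop: gens family4}, and those computations are nontrivial in case (2). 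As written the proposal identifies the correct strategy and the correct key identity but does not constitute a proof.
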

\begin{proof}
 We denote by $\lt(f)$ the leading monomial of $f$ and by
\[
S(f,g)=\frac{\lcm(\lt(f),\lt(g))}
{\lt(f)}f-\frac{\lcm(\lt(f),\lt(g))}
{\lt(g)}g
\]
the S-polynomial of a pair of polynomials $f$ and $g$. In each case we apply Buchberger's criterion by computing the non-trivial S-polynomials and showing that they reduce to 0 modulo the given generators.

\eqref{Prop4.2-case1} This case is clear as the three polynomials have pairwise coprime leading terms.

 \eqref{Prop4.2-case2} Let
    \begin{align*}
        Q &= x^{a-(q+1)e+1}\left[z^{n-r}P+x^{(q+1)e}y^{(q+1)m}\right ] \text{ with \;} \lt(Q)=x^{a-(q+1)e+1}z^{(q+1)n},\\
        Q'&= y^{b-(q+1)m+1}\left[z^{n-r}P+x^{(q+1)e}y^{(q+1)m}\right ]\text{ with \;} \lt(Q')=y^{b-(q+1)m+1}z^{(q+1)n}.
    \end{align*}
       The remaining S-polynomials are either zero or they reduce to zero by symmetry based on the computations below.

\begin{itemize}[itemsep=0.1cm,topsep=0.1cm]
\item $S(x^{a+e+1}, Q) = x^{(q+2)e}Q-x^{a+e+1}z^{(q+1)n} \equiv 0 \pmod{x^{a+e+1}}$

\item 
 $S(y^{b+m+1}, Q') = y^{(q+2)m}Q'-y^{b+m+1}z^{(q+1)n}  \equiv 0 \pmod{y^{b+m+1}} 
$ 

\item $S(Q,Q')= y^{b-(q+1)m+1}Q-x^{a-(q+1)e+1}Q' =0$ 

\item $S(Q,x^{a-qe+1}P) = x^eQ-x^{a-qe+1}z^{n-r}P=x^{a+e+1}y^{(q+1)m}
\equiv 0 \pmod{x^{a+e+1}}$

\item $S(Q,y^{b-qm+1}P) = y^{b-qm+1}Q-x^{a-qe+1}z^{n-r}y^{b-qm+1}P
= x^{(q+1)e}y^{b+m+1}\equiv 0 \pmod{y^{b+m+1}}$

\item $S(x^{a+e+1},x^{a-qe+1}P) = x^{a+e+1}z^{c+1}-x^{a+e+1}P\equiv 0 \pmod{x^{a+e+1}}$ 

\item $S(y^{b+m+1}, x^{a-qe+1}P)= x^{a-qe+1}z^{c+1}y^{b+m+1}-y^{b+m+1} x^{a-qe+1}P\equiv 0 \pmod{y^{b+m+1}}$ 

\item $S(x^{a-qe+1}P, y^{a-qm+1}P)=  0$

\item  $\alignLongunderstack{
 S(z^{n+c+1}, Q)&= z^rQ-x^{a-(q+1)e+1}z^{n+c+1}\\
 &= x^{a-qe+1}z^{n-r}\sum_{i=1}^qx^{ei}y^{mi}z^{c+1-ni}+x^{a+1}y^{(q+1)m}z^r\\
 &= x^{a-qe+1}y^m\left[P-x^{qe}y^{qm}z^r\right]+x^{a+1}y^{(q+1)m}z^r\\
 &\equiv 0 \pmod{x^{a-qe+1}P}}$\hfil\mbox{}
\item $\alignLongunderstack{
S(z^{c+n+1}, x^{a-qe+1}P)&= x^{a-qe+1}z^{c+n+1}-z^{(q+1)n} x^{a-qe+1}P  \\
&= z^{(q+1)n}x^{a-qe+1}\sum_{i=1}^q x^{ie}y^{im}z^{c+1-in}\\
&= x^{a-(q-1)e+1}y^m\left[P-x^{eq}y^{mq}z^r\right]\\
&= x^{a-(q-1)e+1}y^mP-x^{a+e+1}y^{mq}z^r \\
&\equiv 0 \pmod{(x^{a-(q-1)e+1}P,x^{a+e+1})}.
}$\hfil\mbox{}
\end{itemize} 

\smallskip

\eqref{Prop4.2-case4} is subsumed by \eqref{Prop4.2-case2} as the relevant computations are a subset of those above, ignoring all computations that involve $Q$ or $Q'$.
 
\eqref{Prop4.2-case3} Let $T=z^{n-r}P + x^{(q+1)e}y^{(q+1)m}$ with $\lt(T)=z^{(q+1)n}$. In view of the computations in part (1) and since the S-polynomial of a pair of polynomials that has coprime leading terms always reduces to zero, we  only need to  compute
    \begin{align*}
    S(x^{a-qe+1}P,T) &= z^{n-r}x^{a-qe+1}P-x^{a-qe+1}T=-x^{a+e+1}y^{(q+1)m}\\
    &\equiv 0 \pmod{x^{a+e+1}}\\
    S(y^{b-qm+1}P,T) &= z^{n-r}y^{b-qm+1}P-y^{b-qm+1}T=-x^{(q+1)e}y^{b+m+1}\\
    &\equiv 0 \pmod{y^{b+m+1}}.
    \end{align*}   

    The last two ideals coincide because  in \eqref{Prop4.2-case4} $r=0$ so $c+1=qn$ and $c+n+1=n(q+1)$.
\end{proof}

\section{SLP for codimension 3 algebras with binomial Macaulay dual generator}

It is well known that all AG algebras of codimension three have unimodal Hilbert function; see \cite[Theorem 4.2]{Stanley} or \cite{Zanello}. Given that unimodality is one of the significant consequences of the weak Lefschetz property, it is natural to pose the following question: 

\begin{question}\label{q 4.1}
    Do all codimension 3 Artinian Gorenstein ideals have the WLP? 
\end{question}

The next goal of this section is to answer \cref{q 4.1} for codimension 3 Artinian Gorenstein algebras with a binomial Macaulay dual generator. We do so by reduction to initial ideals leveraging \cref{lem:Wiebe}. To utilize this result one need to first establish the WLP for the quotient algebras  of all the ideals in \cref{prop: GB}. Case (1) is easily dealt with as it is a monomial complete intersection. Cases (3) and (4) have identical initial ideal. The quotient algebras given by these ideals are Artinian and have two-dimensional socle (Cohen-Macaulay type two). For these we shall appeal to the literature, in particular to works by Cook--Nagel \cite{CookNagel} and Chase \cite{Chase} that study the WLP and SLP, respectively, in this situation. Since we need to understand these algebras in detail we begin with a lemma describing some of their properties which shall be subsequently used.  Note that the ideal denoted $U$ in \cref{lem: HF of U} below is the same as both the ideals in \cref{prop: GB} (3) and (4).

\begin{lemma}\label{lem: HF of U}
For any $a,b,c,e,m$ and $q$ such that $a+1>qe$, $b+1>qm$ and $q(e+m)\leq c+1<(q+1)(e+m)$ the ideal
\[
U=\bigl\langle x^{a+e+1},y^{b+m+1},z^{(q+1)(e+m)},x^{a-qe+1}z^{c+1}, y^{b-qm+1}z^{c+1}\bigr\rangle
\]
has the property that $R/U$ is a type two module with symmetric, unimodal Hilbert function and
\[
\reg(R/U)=a+b+c+e+m.
\]
\end{lemma}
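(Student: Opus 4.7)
The strategy is to exhibit a short exact sequence relating $R/U$ to two monomial complete intersections and to deduce all four assertions from standard facts about such algebras. Setting $n=e+m$ and $V = (x^{a-qe+1}, y^{b-qm+1}, z^{(q+1)n-c-1})$ (whose exponents are all positive by the hypotheses $a+1>qe$, $b+1>qm$ and $c+1<(q+1)n$), one checks directly from the generators that
\[
U = (x^{a+e+1},\, y^{b+m+1}) + z^{c+1}V.
\]
The technical heart of the proof is to verify $z^{c+1}R \cap U = z^{c+1}V$. Since $U$ is monomial, this reduces to checking for each of the five generators $g$ of $U$ and each monomial $w$ that $z^{c+1}\mid gw$ forces $gw \in z^{c+1}V$; this uses $c+1<(q+1)n$ for the generator $z^{(q+1)n}$ and is immediate for the other generators. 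With this in hand, multiplication by $z^{c+1}$ yields an exact sequence of graded $R$-modules
\[
(\ast) \qquad 0 \to (R/V)(-(c+1)) \xrightarrow{\,\cdot z^{c+1}\,} R/U \to R/(x^{a+e+1},\, y^{b+m+1},\, z^{c+1}) \to 0.
\]

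Set $A_1 = R/(x^{a+e+1}, y^{b+m+1}, z^{c+1})$ and $A_2 = R/V$. Both are three-variable monomial complete intersections, hence Gorenstein with one-dimensional socle and symmetric unimodal Hilbert series. A direct computation gives $\reg(A_1) = a+b+c+e+m$ and $\reg(A_2) = a+b+e+m-c-2$, so $(\ast)$ yields
\[
H_{R/U}(t) = H_{A_1}(t) + t^{c+1}\,H_{A_2}(t) \qand \reg(R/U) = a+b+c+e+m.
\]

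For symmetry and unimodality: $H_{A_1}(t)$ is symmetric about $(a+b+c+e+m)/2$, and $t^{c+1}H_{A_2}(t)$ (whose support is the interval $\{c+1,\ldots,a+b+e+m-1\}$) is symmetric about the same midpoint, as a short arithmetic check confirms. Both summands are symmetric unimodal polynomials, and the sum of two symmetric unimodal polynomials with a common center of symmetry is itself symmetric and unimodal, because both coefficient sequences are simultaneously nondecreasing up to the center and nonincreasing after it. Hence $H_{R/U}$ is symmetric and unimodal.

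Finally, for the type, applying $\Hom_R(K,-)$ to $(\ast)$ gives the upper bound $\dim_K \Soc(R/U) \leq \dim_K \Soc(A_2) + \dim_K \Soc(A_1) = 1 + 1 = 2$. For the lower bound, the image of $\Soc((R/V)(-(c+1)))$ in $R/U$ contributes a socle element in degree $a+b+e+m-1$, while $x^{a+e}y^{b+m}z^c$ is a nonzero standard monomial of $R/U$ in the top degree $a+b+c+e+m$ and hence lies in $\Soc(R/U)$. These two socle classes sit in distinct degrees, so $\dim_K \Soc(R/U) = 2$. The main obstacle is the equality $z^{c+1}R \cap U = z^{c+1}V$ which underpins $(\ast)$; once this is established, everything else is bookkeeping with well-understood monomial complete intersections.
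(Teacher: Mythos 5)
Your proof is correct, and it reaches the paper's conclusion by a slightly different decomposition. The paper writes $U$ as an intersection of two irreducible monomial complete intersections,
\[
U = L \cap L', \qquad L=\bigl\langle x^{a+e+1},y^{b+m+1},z^{c+1}\bigr\rangle,\quad L'=\bigl\langle x^{a-qe+1},y^{b-qm+1},z^{(q+1)n}\bigr\rangle,
\]
and uses the Mayer--Vietoris identity $H_{R/U}=H_{R/L}+H_{R/L'}-H_{R/(L+L')}$ to expand the Hilbert series as a sum of two symmetric unimodal pieces centered at $d/2$; the type-two claim is then read off from the irredundant two-component irreducible decomposition. You instead use the single short exact sequence obtained from coloning/quotienting by $z^{c+1}$, noting $U:z^{c+1}=V$ and $U+(z^{c+1})=L$ are both complete intersections. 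The two routes are equivalent --- indeed $U:z^{c+1}=(L:z^{c+1})\cap(L':z^{c+1})=R\cap V=V$ and $U+(z^{c+1})=L$ --- and the resulting arithmetic is identical, but your packaging has the small advantage of making the second socle generator and its degree completely explicit via the image of $\Soc(R/V)$ under multiplication by $z^{c+1}$. (Incidentally, your degree $a+b+e+m-1=d-c-1$ for that second socle element is the correct one; the paper's text reads $x^{a-qe}y^{b-qm}z^{(q+1)n}$ of degree $d-c$, which is the corner monomial of $L'$ with the $z$-exponent inadvertently not decremented.) Both arguments are valid; no gap.
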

\begin{proof}
 Set $e+m=n$ and $c=qn+r$. The given ideal decomposes further as
\[
U=\bigl\langle  x^{a+e+1},y^{b+m+1},z^{c+1} \bigr\rangle \cap \bigl\langle  x^{a-qe+1},y^{b-qm+1},z^{(q+1)n}\bigr\rangle .
\]
Thus the socle of $R/U$ is spanned by the monomials $x^{a+e}y^{b+m}z^c$, of degree $d=a+b+c+e+m$, and $x^{a-qe}y^{b-qm}z^{(q+1)n}$, of degree $d-c$. Since for Artinian rings the regularity is the top socle degree, the claim regarding $\reg(R/U)$ follows.

Consider $L= \bigl\langle x^{a+e+1},y^{b+m+1},z^{c+1} \bigr\rangle $ and $L'=\bigl\langle x^{a-qe+1},y^{b-qm+1},z^{(q+1)n}\bigr\rangle $ so that $L+L'=\bigl\langle x^{a-qe+1},y^{b-qm+1},z^{c+1}\bigr\rangle $. Then
$$H_{R/U}(t)= H_{R/L}(t)+H_{R/L'}(t)-H_{R/(L+L')}(t)$$
which is equal to
\begin{align*}
&\!\begin{multlined}
 =\frac{(1-t^{a+e+1})(1-t^{b+m+1})(1-t^{c+1})}{(1-t)^3}
+  \frac{(1-t^{a-qe+1})(1-t^{b-qm+1})(1-t^{(q+1)n})}{(1-t)^3}\\
-  \frac{(1-t^{a-qe+1})(1-t^{b-qm+1})(1-t^{c+1})}{(1-t)^3}
\end{multlined}\\
&= \frac{(1-t^{a+e+1})(1-t^{b+m+1})(1-t^{c+1})+(1-t^{a-qe+1})(1-t^{b-qm+1})(t^{c+1}-t^{(q+1)n})}{(1-t)^3}\\
&= s(a+e)s(b+m)s(c)+s(a-qe)s(b-qm)s(n-r)t^{c+1},
\end{align*}
where $s(j)=\sum_{i=0}^j t^i$.
Being the Hilbert series of the complete intersection $R/L$, the summand $s(a+e)s(b+m)s(c)$ is a polynomial of degree $d$ having symmetric and unimodal coefficients. Similarly the summand $s(a-qe)s(b-qm)s(n-r)$ is a polynomial of degree $a+b-(q-1)n-r=a+b+n-c=d-2c-1$ having symmetric and unimodal coefficients. Multiplying it by $t^{c+1}$ shifts these coefficients to degrees $c+1$ to $d-c$, thus centering them about the middle degree of the previous polynomial. Finally adding two polynomials with symmetric and unimodal coefficients that are centered about the same degree $d/2$ results in a polynomial with the same property.
\end{proof}

We are now ready for the second main result of this section, which answers \cref{q 4.1} affirmatively for AG algebras with binomial Macaulay dual polynomial. Having outlined most of the steps of the proof before  \cref{lem: HF of U} it remains to remark only that case (2) is the most involved. Here the quotient algebras given by the initial ideal has socle of dimension three. We arrive at the proof in that case by a delicate argument connecting certain type three algebras with appropriate counterparts of type two.

\begin{theorem}\label{thm: family 6}
Assume that $K$ has characteristic zero. Consider the binomial
  \[ F=X^aY^bZ^c(Z^n-X^eY^m) \qwith n=e+m\ge 0. 
  \]
  Then the Artinian Gorenstein algebra $A_F$ satisfies the SLP.
\end{theorem}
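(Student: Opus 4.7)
The plan is to invoke \cref{lem:Wiebe} and thereby reduce SLP for $A_F$ to SLP for the quotient $R/\mathrm{in}(\Ann_R(F))$ under any term order with $z>x,y$. By \cref{prop: GB}, this initial ideal is one of four explicit monomial ideals, and I would handle them in turn.

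In case (1) of \cref{prop: GB}, the initial ideal $\langle x^{a+e+1},y^{b+m+1},z^{c+n+1}\rangle$ is a monomial complete intersection in three variables, whose quotient has SLP in characteristic zero by the Stanley--Watanabe theorem. In case (4) the hypothesis $r=0$ gives $z^{n(q+1)}=z^{c+1}$, which absorbs the two mixed generators, so again the initial ideal is a monomial complete intersection and SLP follows from Stanley--Watanabe. In case (3), the initial ideal has exactly the five-generator shape treated by Chase in \cref{thm: Chase}; with the identifications $a\leftrightarrow a+e+1$, $b\leftrightarrow b+m+1$, $c\leftrightarrow n+c+1$, $\alpha\leftrightarrow a-qe+1$, $\beta\leftrightarrow b-qm+1$, $\gamma\leftrightarrow c+1$, I would verify that at least one of Chase's three combinatorial conditions holds, using the constraints $c+1=qn+r$ with $0<r<n$ and $a\geq qe$, $b\geq qm$ that are forced by this case, splitting into subcases in $r$ and the relative sizes of $a-qe+1$ and $b-qm+1$ as needed.

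The main obstacle is case (2) of \cref{prop: GB}. The seven-generator initial ideal admits the irreducible decomposition
\[
\langle x^{a+e+1}, y^{b+m+1}, z^{c+1}\rangle \cap \langle x^{a-qe+1}, y^{b-qm+1}, z^{(q+1)n}\rangle \cap \langle x^{a-(q+1)e+1}, y^{b-(q+1)m+1}, z^{c+n+1}\rangle,
\]
whose three components witness three distinct socle generators. Thus the quotient has Cohen--Macaulay type three and Chase's theorem does not apply. My strategy is to connect this type-three algebra to the type-two algebra $U$ analysed in \cref{lem: HF of U} (the intersection of the first two components, handled already in cases (3)/(4)) by a Mayer--Vietoris short exact sequence whose remaining term is governed by the third component, which is itself a monomial complete intersection. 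Since SLP is known for both outer modules, it remains to propagate SLP to the middle term by showing that a single general linear form $\ell$ induces maximal-rank multiplication maps on all three modules simultaneously.

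The key difficulty is this synchronization step: the existence of a Lefschetz element on each summand is not enough, one must align the kernels and images of the multiplication maps across the Mayer--Vietoris sequence in each degree. I expect to control this by first computing the Hilbert function of $R/\mathrm{in}(\Ann_R(F))$ by an inclusion--exclusion analogous to \cref{lem: HF of U}, confirming that it is symmetric and unimodal with socle degree $d=\deg F$, and then reducing the SLP assertion to maximal-rank of the middle multiplication maps $\times\ell^{d-2i}\colon [R/\mathrm{in}(\Ann_R(F))]_i\to[R/\mathrm{in}(\Ann_R(F))]_{d-i}$ for $0\leq i\leq \lfloor d/2\rfloor$. Each such map can then be verified by a direct dimension count, using the explicit monomial basis provided by the staircase of the three-layer decomposition displayed above. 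The symmetry of the Hilbert function, together with the characteristic-zero hypothesis and the genericity of $\ell$, then upgrades the middle-degree verifications to the full SLP.
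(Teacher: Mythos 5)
Your overall route matches the paper's own: reduce via \cref{lem:Wiebe} to the initial ideal of $\Ann(F)$ with respect to a term order with $z>x,y$ and treat the shapes listed in \cref{prop: GB}. However, two of your case dispositions are off, and one of them hides the hard part of the argument. A smaller point first: you omit case~(0) of \cref{prop: gens family4} ($e=0$ or $m=0$, where $em\neq 0$ fails and \cref{prop: GB} does not apply); the paper handles it by citing \cite{ADFMMSV1}. More concretely, your claim that case~(4) (that is, $r=0$) yields a monomial complete intersection is false: with $r=0$ we have $c+1=qn$, so $n(q+1)=c+1+n>c+1$, and the pure power $z^{n(q+1)}$ does \emph{not} equal $z^{c+1}$. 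The two mixed generators $x^{a-qe+1}z^{c+1}$ and $y^{b-qm+1}z^{c+1}$ are not absorbed, and the initial ideal is the same five-generator, type-two ideal $U$ as in case~(3). Both must be routed through \cref{thm: Chase}, with Chase's $c$ identified with $n(q+1)$ (not $n+c+1$; the two differ by $r$, and the required inequalities fail for $r\geq 2$ under your substitution).

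The more serious gap is case~(2). Your irreducible decomposition of $J$ into three components, and the choice to take $U$ = intersection of the first two and $V$ = the third, match the paper exactly, and you correctly identify the synchronization of multiplication maps across the Mayer--Vietoris sequence as the key difficulty. But your proposed resolution — compute the Hilbert function by inclusion--exclusion, confirm it is symmetric and unimodal with socle degree $d$, and then establish maximal rank of $\times\ell^{d-2i}$ ``by a direct dimension count, using the explicit monomial basis'' — does not resolve that difficulty. A symmetric unimodal Hilbert function is necessary but nowhere near sufficient for SLP, and a dimension count gives only the sizes of the source and target, not the rank of the map; you have restated the problem rather than solved it. The paper's actual argument is a nested snake-lemma computation. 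From $0\to R/J\to R/U\oplus R/V\to R/(U+V)\to 0$, injectivity of $\times\ell^{d-2i}$ on $R/J$ follows once (i) the map is injective on $R/U$ — which requires \emph{both} that $R/U$ has SLP (cases (3)/(4)) \emph{and} that $\reg(R/U)=d$ with symmetric Hilbert function (\cref{lem: HF of U}) so that the correct power of $\ell$ is being used — and (ii) the induced map on kernels $L\to L'$ of the $V$ and $U+V$ columns is injective. Step (ii) is itself established by a second snake lemma applied to $0\to (U+V)/V\to R/V\to R/(U+V)\to 0$, which reduces it to the injectivity of a multiplication map on the \emph{shifted} monomial complete intersection $(U+V)/V\cong z^{(q+1)n}\cdot K[x,y,z]/\bigl\langle x^{a-(q+1)e+1},y^{b-(q+1)m+1},z^r\bigr\rangle$; one must then check that the degree shift $(q+1)n$ matches up with the regularity of this quotient so that the relevant map is of the form $C_j\to C_{\reg(C)-j}$ with $j<\reg(C)/2$. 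That book-keeping is precisely where $c+1=nq+r$ enters and where your outline is silent.
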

\begin{proof}
Recall that $A_F=K[x,y,z]/\Ann(F)$ where $\Ann(F)$ is one of the ideals described in \cref{prop: gens family4}.
We proceed by case analysis based on the three cases outlined in \cref{prop: gens family4}.

In case (0) we reach the desired conclusion by applying \cite[Proposition 3.8]{ADFMMSV1}. 

In the remaining cases, fix any monomial term order having $z>x$ and $z>y$ and let $J$ be the initial ideal of $\Ann(F)$ with respect to this term order. \cref{prop: GB} gives the following possibilities for $J$:

\begin{enumerate}[leftmargin=5em,itemsep=0.1cm]
  \item[\quad \eqref{Prop4.2-case1}]\label{J-case1} $J=\bigl\langle x^{a+e+1},y^{b+m+1},z^{c+1}\bigr\rangle $;
    \item[\quad \eqref{Prop4.2-case2}]  \label{J-case2}  
    $J=\bigl\langle  x^{a+e+1},y^{b+m+1},z^{c+n+1},x^{a-(q+1)e+1}z^{(q+1)n},$
    
    \hfill$ y^{b-(q+1)m+1}z^{(q+1)n},x^{a-qe+1}z^{c+1},$ $ y^{b-qm+1}z^{c+1}\bigr\rangle $;  
    \item[\eqref{Prop4.2-case4} \& \eqref{Prop4.2-case3}] \label{J-case3} $J=\bigl\langle  x^{a+e+1},y^{b+m+1},z^{(q+1)n},x^{a-qe+1}z^{c+1}, y^{b-qm+1}z^{c+1}\bigr\rangle $.
\end{enumerate}

 Let $d=\reg(A_F)$. We show in every case that $R/J$ has the SLP. By \cref{lem:Wiebe}, it then follows the same is true for $A_F$.

 The ideal $J$ in \eqref{Prop4.2-case1} is a monomial complete intersection, which has SLP as shown in \cite{Stanley}.

 The ideal $J$ in \eqref{Prop4.2-case4} and \eqref{Prop4.2-case3} is part of the family of type two monomial ideals discussed in \cite{CookNagel} and \cite{Chase}.  We observe in particular that the criteria in  \cref{thm: Chase} (1) ensure that $R/J$ satisfies SLP since the exponents of the generators of $J$ satisfy
 \[
 (a-qe+1)+(b-qm+1)-1 \leq (a+e+1)+(b+m+1)-(q+1)n \leq (a-qe+1)+(b-qm+1)+1.
 \]
Finally, we consider the ideal $J$ in case \eqref{Prop4.2-case2}. Observe that  it decomposes as $J=U\cap V$, where
\begin{align*}
U&= \bigl\langle  x^{a+e+1},y^{b+m+1},z^{(q+1)n},x^{a-qe+1}z^{c+1}, y^{b-qm+1}z^{c+1}\bigr\rangle ,\\
V&=  \bigl\langle  x^{a-(q+1)e+1},y^{b-(q+1)m+1},z^{c+n+1}\bigr\rangle , \text{ and }\\
U+V&= \bigl\langle  x^{a-(q+1)e+1},y^{b-(q+1)m+1},z^{(q+1)n}\bigr\rangle .
\end{align*}
Note that the ideal $U$ above is the same as the ideal $J$ from case (2).

Let $0\leq i < d/2$. For $\ell\in R_1$ there is a commutative diagram
  \begin{equation}\label{eq: MV}
      \begin{tikzcd}
        0\arrow{r} &{[R/J]}_i\arrow{r}\arrow{d}{\times\ell^{d-2i}}&{[R/U]}_i\oplus {[R/V]}_i \arrow{r} \arrow{d}{\times \ell^{d-2i}} & {[R/(U+V)]}_i\arrow{r}\arrow{d}{\times \ell^{d-2i}} &0\\%
0\arrow{r} &{[R/J]}_{d-i}\arrow{r}&{[R/U]}_{d-i}\oplus {[R/V]}_{d-i} \arrow{r}& {[R/(U+V)]}_{d-i}\arrow{r} &0
      \end{tikzcd}
\end{equation}
in which the map $[R/U]_i\xrightarrow{\times \ell^{d-2i}}[R/U]_{d-i}$ is injective due to $R/U$ having the SLP as shown in case (2) and symmetric unimodal Hilbert function as shown in \cref{lem: HF of U}. Let
\begin{align*}
    L &= \ker\left( [R/V]_i\xrightarrow{\times \ell^{d-2i}}[R/V]_{d-i} \right)\\
    L' &= \ker \left( [R/(U+V)]_i\xrightarrow{\times \ell^{d-2i}}[R/(U+V)]_{d-i}\right).
\end{align*}
We will show that $L$ includes into $L'$ via the map induced by $R/V\mapsto R/(U+V)$. This will establish the claim by ensuring the leftmost vertical map in \eqref{eq: MV} is injective by means of the snake lemma.

Towards this end consider the cyclic module
\begin{equation}\label{eq: U+V/V}
\frac{U+V}{V} \cong z^{(q+1)n}\cdot \frac{K[x,y,z]}{\bigl\langle x^{a-(q+1)e+1},y^{b-(q+1)m+1},z^{r}\bigr\rangle }  =: C,
\end{equation}
which fits into the diagram
 \begin{equation}\label{eq: diagram U+V/V}
      \begin{tikzcd}
        0\arrow{r} &{[(U+V)/V]}_i\arrow{r}\arrow{d}{\times\ell^{d-2i}}&{[R/V]}_i \arrow{r} \arrow{d}{\times \ell^{d-2i}} & {[R/(U+V)]}_i\arrow{r}\arrow{d}{\times \ell^{d-2i}} &0\\%
0\arrow{r} &{[(U+V)/V]}_{d-i}\arrow{r}& {[R/V]}_{d-i} \arrow{r}& {[R/(U+V)]}_{d-i}\arrow{r} &0
      \end{tikzcd}
\end{equation}
The leftmost vertical map in \eqref{eq: diagram U+V/V} corresponds to the map
\begin{equation}\label{eq: C map}
C_{i-(q+1)n}\xrightarrow{\times \ell^{d-2i}} C_{d-i-(q+1)n}
\end{equation}
on the monomial complete intersection $\displaystyle{C=\frac{K[x,y,z]}{\langle x^{a-(q+1)e+1},y^{b-(q+1)m+1},z^r\rangle}}$ which has
\[
\reg(C)=a-(q+1)e+b-(q+1)m+r-1=a+b+r-(q+1)n-1.
\]
Set $j=i-(q+1)n$. Then 
\[
\reg(C)-j=a+b+r-1-i=a+b+c+n-(q+1)n-i=d-(q+1)n-i
\]
and 
\[
d-2i= a+b+c+n -2i=\reg(C)-2j.
\]
Thus the map \eqref{eq: C map} can be rewritten as 
\begin{equation}\label{eq: C map 2}
C_{j}\xrightarrow{\times \ell^{\, \reg(C)-2j}} C_{\reg(C)-j}.
\end{equation}
Note that for $i<(q+1)n$ the map \eqref{eq: C map} is vacuously injective. For $(q+1)n\leq i <d/2$ we have
\[
0\leq j=i-(q+1)n < \frac{d}{2}-(q+1)n=\frac{a+b+r-(q+1)n-1}{2} = \frac{1}{2}\reg(C).
\]
Since $C$ satisfies the  SLP and has symmetric unimodal Hilbert function and since $ j< \reg(C)/2$, we deduce that the map \eqref{eq: C map 2} is  injective, hence the map \eqref{eq: C map} is injective too. By means of the the snake lemma applied to  \eqref{eq: diagram U+V/V}, this establishes injectivity of the induced map $L\to L'$ between the kernels of the last two vertical maps. This yields that the leftmost vertical map in \eqref{eq: MV} is injective and allows to conclude that $R/J$ has SLP. As explained in \cref{lem:Wiebe}, this establishes that $A_F$ has the SLP.
\end{proof}

The AG ring $A_F$ from \cref{e:failure} shares some of the salient features of codimension three AG algebras having binomial Macaulay dual generator. Thus one may wonder what goes wrong in \cref{e:failure}, namely which steps of the proof of \cref{thm: family 6}
cannot be applied in that setting.

\begin{example}\label{counterexample explained}
  Consider the AG ring $A_F$ from \cref{e:failure}.  The set of minimal generators of $I=\Ann(F)$ is a Gr\"obner basis with respect to the reverse lexicographic  order where $z>t>x>y$. The initial ideal of $I$ is an ideal of type three
    \[
\bigl\langle  z^{2},z,t,t^{2},z,x^{2},t,x^{2},x^{4},z,y^{2},x^{2}y^{2},y^{4}\bigr\rangle 
= \bigl\langle x^2,y^4,z,t^2\bigr\rangle\cap \bigl\langle x^4,y^2,z,t\bigr\rangle  \cap \bigl\langle  x^2,y^2,z^2,t \bigr\rangle .
    \]
We consider multiplication maps by a general linear form $\ell$ from degree $\lfloor \frac{d-1}{2}\rfloor =2$ to degree $3$. One could try to imitate the proof of \cref{thm: family 6} taking $U$ to be the intersection of two of the irreducible components above and $V$ to be the remaining component.  Only when $U$ is the intersection of the first two components is the map $[R/U]_2\xrightarrow{\times \ell} [R/U]_3$ injective (in the other cases it is surjective). In this case, however, for the cyclic module
\[
\frac{U+V}{V}=z\cdot \frac{K[x,y,z,t]}{\bigl\langle  x^2,y^2,z,t \bigr\rangle }\]
the map
$[(U+V)/V]_3\xrightarrow{\times \ell} [(U+V)/V]_3$ is not injective (it is surjective).

Thus, the major differences are that \cref{lem: HF of U} does not hold, more specifically $R/U$ need not have a symmetric Hilbert function and need not have the same top degree as $A_F$. Moreover, another important difference is that $\reg((U+V)/V)\neq \reg(A_F)$.
\end{example}

\section{Doubling  in codimension $3$}\label{doubling_const}

As an application of \cref{thm: family 6}   we will prove in \cref{doubling2} that any codimension 3 Artinian Gorenstein algebra with binomial Macaulay dual generator is a doubling of a suitable 0-dimensional subscheme in $\PP^2$.


We begin with the appropriate definitions.
\begin{definition}
Set $R= K[x_1,\ldots,x_n]$. The \emph{canonical module} of a graded $R$-module
$M$ is $\omega_M = \Ext^{n - \dim M}_R (M, \omega _R)$.
\end{definition}

One has $\omega_K \cong K$ and for any homogeneous ideal $I\subset K[x_1,\dots ,x_n]$ of codimension $c$, we have $\omega _{R/I}=\Ext^c_R(R/I,R(-n)).$

\begin{definition}\label{doubling} \cite[Section 2.5]{KKRSSY}
Let $J\subset R$ be a homogeneous ideal of codimension $c$, such that $R/J$ is Cohen-Macaulay
and $\omega_{R/J}$ is its canonical module. Furthermore, assume that $R/J$ satisfies the
condition $G_0$ (i.e., it is Gorenstein at all minimal primes). 
Let $I$ be an ideal of codimension $c + 1$. $I$ is called a {\em doubling } of $J$ via $\psi$ if there exists  a short exact sequence of $R/J$ modules
\begin{equation}\label{eq:doubling}
0 \rightarrow \omega_{R/J}(-d)\stackrel{\psi}{\rightarrow} R/J \rightarrow R/I\rightarrow 0.
\end{equation}
\end{definition}

By \cite[Proposition 3.3.18]{BH93}, if $I$ is a doubling, then $R/I$ is a Gorenstein ring.  
Moreover, the mapping cone of $\psi$ in \eqref{eq:doubling} gives a resolution of $R/I $. 
This mapping cone is the direct sum of the minimal free resolution $F_{\bullet }$ of $R/J$ with its dual (reversed) complex $\Hom(F_{\bullet},R)$ which justifies the terminology of \say{doubling}.

It is not true that every Artinian Gorenstein ideal of codimension $c + 1$ is a doubling of some codimension $c$ ideal (see, for instance,
\cite[Example 2.19]{KKRSSY}).

For our purposes, we first need  a criterion  to determine whether a codimension 3 Artinian Gorenstein algebra $A$ is a doubling of a 0-dimensional subscheme in $\PP^2$ in terms of its Buchsbaum-Eisenbud matrix. To state it we shall fix some additional notation.

Let $I\subset K[x,y,z]$ be an Artinian Gorenstein ideal with a minimal self-dual (up to twist) resolution given by the Buchsbaum-Eisenbud structure theorem (see \cite{BE}):
\begin{equation}
    \label{BE}
    0 \longrightarrow R(-e)\rightarrow \bigoplus _{i=1}^{2n+1} R(-q_i) \xrightarrow{M} \bigoplus _{i=1}^{2n+1} R(-p_i) \xrightarrow{f_1\cdots f_{2n+1}} I \rightarrow 0
\end{equation}
where  \begin{itemize}[itemsep=0.1cm,topsep=0.1cm]
    \item $p_1\le p_2\le \cdots \le p_{2n+1}$;
    \item $q_1\ge q_2\ge \cdots \ge q_{2n+1}$; and
    \item $e-q_i=p_i$ for all $1\le i \le 2n+1$ (by self-duality).
\end{itemize}
Moreover, $M$ is a skew symmetric matrix whose Pfaffians are the $2n+1$ generators $f_1, \cdots ,f_{2n+1}$ of $I$.

\begin{proposition}\label{key_doubling} Utilizing the above notation, assume that the skew-symmetric $(2n+1)\times (2n+1)$ matrix $M$ has the following shape
\begin{equation}\label{eq: skew matrix}
\begin{array}{cc}
& n \hspace{0.3in}  n+1 \\
\begin{array}{l}
 n\\   n+1 \\
\end{array}
&
\left [
\begin{array}{ccc|ccc}

&\mbox{0}&&&\mbox{A}&\\
\hline

&\mbox{$-A^T$}&&&\mbox{B}&\\
\end{array}
\right ]\\
\end{array}
\end{equation}
where $B$ is a $(n+1)\times (n+1)$ skew symmetric matrix. We also assume that the maximal minors of $A$ define a codimension 2 ideal $J\subset R$ with $R/J$ Cohen-Macaulay. Then, $I$ is a doubling of $J$.

Conversely, if $I$ is a doubling of $J$ such that $R/J$ has Hilbert-Burch matrix $A$, then the skew-symmetric matrix of $R/I$ has the form in \eqref{eq: skew matrix}.
\end{proposition}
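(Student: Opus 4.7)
The plan is to exploit the block decomposition of $M$ to split the Buchsbaum--Eisenbud resolution of $R/I$ into a subcomplex resolving $R/J$ and a quotient complex resolving $\omega_{R/J}(-d)$; the long exact sequence in homology will then yield the doubling short exact sequence.

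First I will verify that the maximal minors of $A$ appear among the Pfaffians of $M$. By a direct expansion over perfect matchings, for any skew-symmetric matrix $\begin{pmatrix}0 & C\\-C^T & D\end{pmatrix}$ with $C$ square of size $n$ and $D$ skew, the Pfaffian equals $\pm\det(C)$ independently of $D$: the vanishing upper-left block forces every contributing matching to pair each index of the first block with an index of the second. Applying this to the principal submatrices of $M$ obtained by deleting rows and columns $n+1,\dots,2n+1$ shows that the last $n+1$ Pfaffians $f_{n+1},\dots,f_{2n+1}$ agree up to sign with the maximal minors of $A$ and hence generate $J$; in particular $J\subseteq I$. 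Writing $F_1=F_1'\oplus F_1''$ and $F_2=F_2'\oplus F_2''$, where the primed summands collect the first $n$ summands, the block form of $M$ identifies the subcomplex $\mathcal{S}\colon 0\to F_2'\xrightarrow{-A^T}F_1''\xrightarrow{\alpha''}R$ of the BE resolution with the Hilbert--Burch resolution of $R/J$, where $\alpha''$ collects the last $n+1$ Pfaffians.

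The main step is to recognize the quotient complex $\mathcal{Q}\colon 0\to R(-e)\xrightarrow{\beta''}F_2''\xrightarrow{A}F_1'\to 0$ as a resolution of $\omega_{R/J}(-d)$, where $\beta''$ is obtained by restricting the top differential of the BE resolution. The self-duality of the BE resolution, which is compatible with the block decomposition of $M$, yields canonical isomorphisms $F_2''\cong (F_1'')^*(-e)$ and $F_1'\cong (F_2')^*(-e)$. Under these identifications, $A\colon F_2''\to F_1'$ becomes the dual of the Hilbert--Burch map $A^T\colon F_2'\to F_1''$, and $\beta''$ becomes the dual of $\alpha''$, so with $e=d+3$ the complex $\mathcal{Q}$ coincides with the $(-d)$-shift of $\Hom_R(\mathcal{S},R(-3))$, which resolves $\omega_{R/J}$. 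Thus $H_1(\mathcal{Q})\cong \omega_{R/J}(-d)$, and the long exact sequence associated to the short exact sequence of complexes $0\to \mathcal{S}\to (\text{BE-resolution})\to \mathcal{Q}\to 0$ collapses to the desired doubling sequence $0\to \omega_{R/J}(-d)\to R/J\to R/I\to 0$.

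For the converse, I will apply the mapping cone construction (equivalently, the horseshoe lemma) to the defining doubling short exact sequence, using the Hilbert--Burch resolution of $R/J$ and its dual shift as a resolution of $\omega_{R/J}(-d)$. The resulting length-three resolution of $R/I$ has middle differential a block matrix of the shape $\begin{pmatrix}0 & A\\-A^T & B\end{pmatrix}$, in which $B$ records a chain-level lifting of the embedding $\omega_{R/J}(-d)\hookrightarrow R/J$. The main technical obstacle throughout will be bookkeeping of shifts, signs, and the precise form of the self-duality isomorphism, in order to ensure that $\mathcal{Q}$ indeed represents $\omega_{R/J}(-d)$ rather than some spurious twist or dual.
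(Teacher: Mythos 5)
Your argument is correct, and for the forward implication it takes a genuinely different route from the paper's. Both proofs begin by recognizing, from the block shape of $M$, that the Pfaffians obtained by deleting a row and column with index in $\{n+1,\dots,2n+1\}$ are (up to sign) the maximal minors of $A$; your perfect-matching expansion supplies an honest justification for a step the paper only asserts. From there the paper works at the level of modules: it writes the twisted dual of the Hilbert--Burch resolution as a presentation of $\omega_{R/J}(-d)$ and constructs $\psi\colon\omega_{R/J}(-d)\to R/J$ by hand, sending the generators to the remaining Pfaffians and checking that the Hilbert--Burch relations recorded by $A^{T}$ are respected. You instead work at the level of complexes: the block form of $M$ produces a short exact sequence of complexes $0\to\mathcal{S}\to\mathcal{F}_\bullet\to\mathcal{Q}\to 0$ in which $\mathcal{S}$ is the Hilbert--Burch resolution of $R/J$ and $\mathcal{Q}$ is, via the BE self-duality, a resolution of $\omega_{R/J}(-d)$, and the doubling sequence is then read off the long exact homology sequence. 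These are two dressings of the same underlying self-duality, but your chain-level version replaces the paper's well-definedness check for $\psi$ with a formal consequence of the construction, which is cleaner and arguably more transparent. For the converse you use the same mapping-cone argument as the paper, with $B$ recording the chain-level lift. The one point you rightly flag and do not carry out --- verifying that the self-duality isomorphism $F_2\cong F_1^{*}(-e)$ restricts compatibly to the primed and double-primed summands and produces $\omega_{R/J}(-d)$ with the correct sign and twist --- does hold here, since that isomorphism is the coordinate identification forced by the skew-symmetry of $M$ and therefore preserves the index blocks; a complete writeup should spell this out, but there is no obstruction.
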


\begin{proof}
For the direct implication, 
    we first observe that the matrix $A$ is the HIlbert-Burch matrix of  a 0-dimensional subscheme $Z\subset \PP^2$ whose homogeneous ideal $J=I(Z)$ has the following minimal free resolution:
\begin{equation}\label{0dim}
 0 \rightarrow \bigoplus _{n+2}^{2n+1} R(-q_i) \xrightarrow{A} \bigoplus _{i=1}^{n+1} R(-p_i) \xrightarrow{f_1\cdots f_{n+1}} J \rightarrow 0.
\end{equation}
We easily check that the Pfaffians that we get from $M$ deleting the i-th row and column
for $n+1\le i\le 2n+1$ coincide with the maximal minors of $A$. Therefore, we have an inclusion of $J\subset I$.

Dualizing  and twisting by $-e$ the exact sequence \eqref{0dim} and we get a minimal resolution of $\omega_{R/J}(-e+3)$:
$$
 0 \rightarrow R(-e) \rightarrow \bigoplus _{j=1}^{n+1} R(p_j-e) \xrightarrow{A^T} \bigoplus _{i=n+2}^{2n+1} R(-e+q_i) \rightarrow \omega_{R/J}(-e+3) \rightarrow 0.
$$
Since $e-q_i=p_i$ for $1\le i \le 2n+1$, we have:
\begin{equation}\label{eq: res canonical module}
 0 \rightarrow R(-e) \rightarrow\bigoplus _{j=1}^{n+1} R(-q_i) \xrightarrow{A^T} \bigoplus _{i=n+2}^{2n+1} R(-p_i)\rightarrow \omega_{R/J}(-e+3) \rightarrow 0.
\end{equation}
So, $\omega _{R/J}(-e+3)$ has $n$  generators $g_1,\cdots ,g_n$ of degree $p_{n+2}, \cdots ,p_{2n+1}$, respectively, which allow us to construct
a map $$ \psi\colon\omega_{R/J}(-e+3) \to R/J $$
    by mapping the generators of $\omega _{R/J}(-e+3)$ to the elements
$f_{n+2},\cdots ,f_{2n+1}$ of $R/J$, respectively. This map is well defined since these elements of $R/J$ satisfy the same relations that the generators of $\omega_{R/J}$ do, given by the transpose of the matrix $A$, $A^T$, which is equal to $-A$. This yields a short exact sequence
$$
0  \rightarrow \omega_{R/J}(-e+3)  \rightarrow R/J  \rightarrow R/I  \rightarrow 0
$$
which allows us to conclude that $R/I$ is a doubling of $Z$.

The converse follows from the doubling construction explained in \cref{eq:doubling}. Indeed, by hypothesis, we have that $R/J$ and $\omega_{R/J}$ have minimal free resolutions as in \eqref{0dim} and \eqref{eq: res canonical module}, respectively. Taking the mapping cone of $\psi$ in \eqref{eq:doubling} yields the desired conclusion, where the matrix $B$ represents the lift of $\psi$ to a map of complexes restricted to homological degree two.
\end{proof}

We are now able to apply our \cref{prop: gens family4} on the structure of codimension three AG algebras having binomial Macaulay dual generator to conclude that they are doublings.

\begin{corollary}\label{doubling2} Any codimension 3 Artinian Gorenstein algebra with binomial Macaulay dual generator is a doubling of a 0-dimensional subscheme in $\PP^2$.    
\end{corollary}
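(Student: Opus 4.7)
The plan is to apply Proposition~\ref{key_doubling} to each of the four cases identified in Theorem~\ref{prop: gens family4}, by pointing out that the explicit skew-symmetric Buchsbaum--Eisenbud matrices constructed there already display the required block structure (zero $n\times n$ block in the upper-left), so the desired Hilbert--Burch matrix $A$ can simply be read off as the upper-right $n\times(n+1)$ block.

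For Case (0) and Case (1), the annihilator $\Ann(F)$ is a codimension 3 complete intersection on three generators $f_1,f_2,f_3$. Its Koszul presentation provides a $3\times 3$ skew-symmetric matrix whose top-left $1\times 1$ block is trivially zero; here $n=1$ and $A=[f_3,-f_2]$, so the ideal of maximal minors is $J=\langle f_2,f_3\rangle$, a codimension 2 complete intersection. In particular $R/J$ is Cohen--Macaulay and defines a 0-dimensional complete intersection subscheme of $\PP^2$, and Proposition~\ref{key_doubling} yields that $A_F$ is a doubling of $R/J$.

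For Cases (2), (3) and (4), the skew-symmetric matrices \eqref{eq: matrix 2}, \eqref{eq: matrix 3}, \eqref{eq: matrix 4} from the proof of Theorem~\ref{prop: gens family4} have top-left blocks of sizes $3\times 3$, $2\times 2$, $2\times 2$ respectively, all of which are identically zero. Extracting the top-right $n\times(n+1)$ block in each case supplies the candidate Hilbert--Burch matrix $A$. The only remaining hypothesis of Proposition~\ref{key_doubling} to verify is that the ideal $J=I_n(A)$ of maximal minors has codimension exactly 2 in $R$: by the Hilbert--Burch theorem, this automatically entails that $R/J$ is Cohen--Macaulay of codimension 2 with Hilbert--Burch matrix $A$.

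The codimension-2 property of $J$ can be checked by a direct expansion of the maximal minors of $A$. In each case these minors include a pure power of one of the variables (for instance $y^{b+m+1}$ appears as a $3\times 3$ minor of $A$ in Case (2), and $x^{a+e+1}$ appears as a $2\times 2$ minor in Cases (3) and (4)), so $V(J)$ is contained in a coordinate hyperplane. Reducing the remaining minors modulo this pure power (using $P|_{y=0}=z^{c+1}$ and the analogous substitutions for $x$) shows that the remaining generators contain a pure power of $z$, so $V(J)$ is further contained in a coordinate line, and hence has codimension at least 2 in $\mathbb{A}^3$. The reverse inequality $\mathrm{codim}(J)\leq 2$ is immediate because $J$ is properly contained in the codimension-3 ideal $\Ann(F)$. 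The main labour in this plan is therefore the explicit determinantal computation of the maximal minors of $A$ in Case (2), where $A$ is $3\times 4$; the remaining cases are $2\times 3$ and require only three $2\times 2$ determinants each.
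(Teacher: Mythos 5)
Your approach is the same as the paper's: read off the block form from the matrices \eqref{eq: matrix 2}, \eqref{eq: matrix 3}, \eqref{eq: matrix 4}, then invoke \cref{key_doubling} (with the complete-intersection cases handled separately). You go further than the paper by attempting to verify the hypothesis of \cref{key_doubling} that $J=I_n(A)$ has codimension exactly 2, which the paper leaves implicit; that extra care is worthwhile and your codimension-$\geq 2$ argument (exhibit two coprime maximal minors, reduce one modulo a pure power of a variable) is essentially sound.

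However, your justification for $\mathrm{codim}(J)\leq 2$ is wrong. Being properly contained in a codimension-3 ideal only bounds $\mathrm{codim}(J)$ above by $3$, not by $2$ (for instance $(x,y,z^2)\subsetneq(x,y,z)$ both have height $3$). The correct reason is determinantal: $J$ is the ideal of maximal minors of an $n\times(n+1)$ matrix, and for any proper such ideal the Eagon--Northcott bound gives $\mathrm{grade}\,I_n(A)\leq (n-n+1)((n+1)-n+1)=2$. Equivalently, once you have shown $\mathrm{codim}(J)\geq 2$ as above, the Hilbert--Burch theorem forces equality and Cohen--Macaulayness. There is also a small bookkeeping error: in the case corresponding to matrix \eqref{eq: matrix 3}, the $2\times 2$ minors of $A$ include $y^{b+m+1}$ (and $x^{a-qe+1}P$, $y^{b-qm+1}P$), not $x^{a+e+1}$; the power $x^{a+e+1}$ shows up only in the case of matrix \eqref{eq: matrix 4}. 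The coprimality argument still goes through in each case, but the specific minors must be matched to the correct matrix.
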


\begin{proof}
Complete intersections are known to be doublings, settling cases (0) and  \eqref{Prop4.2-case1} of \cref{prop: gens family4}.
It immediately follows from \eqref{eq: matrix 2}, \eqref{eq: matrix 3} and \eqref{eq: matrix 4} in the proof of \cref{prop: gens family4} that the Buchsbaum-Eisenbud matrix of the codimension 3 Artinian Gorenstein algebra $A_F$ is described by a matrix of the form given in  \eqref{eq: skew matrix}. Thus the claim follows by applying \cref{key_doubling}.
\end{proof}

\bigskip 

\bibliographystyle{alpha} %
\bibliography{Journal}
\end{document}